\newtheorem{assumption}{Assumption}
\newtheorem{theorem}{Theorem}[section]
\newtheorem{lemma}[theorem]{Lemma}
\newtheorem{remark}[theorem]{Remark}
\newtheorem{proof}{Proof}
\begin{document}

\pagestyle{myheadings}

\title{Distributed Aggregative Optimization with Quantized Communication}

\author{Ziqin Chen and Shu Liang}

\contact{Ziqin}{Chen}{The Department of Control Science and Engineering, Tongji University, Shanghai, 201210, China}{cxq0915@tongji.edu.cn}
\contact{Shu}{Liang}{The Department of Control Science and Engineering, Tongji University, Shanghai, 201210, China}{sliang@tongji.edu.cn}

\markboth{Ziqin Chen and Shu Liang} {Distributed Aggregative Optimization with Quantized Communication}
\maketitle
\begin{abstract}
In this paper, we focus on an aggregative optimization problem under communication bottleneck. The aggregative optimization is to minimize the sum of local cost functions. Each cost function depends on not only local state variables but also the sum of functions of global state variables. The goal is to solve the aggregative optimization problem through distributed computation and local efficient communication over a network of agents without a central coordinator. Using the variable tracking method to seek the global state variables and the quantization scheme to reduce the communication cost spent in the optimization process, we develop a novel distributed quantized algorithm, called D-QAGT, to track the optimal variables with finite bits communication. Although quantization may lose transmitting information, our algorithm can still achive the exact optimal solution with linear convergence rate. Simulation experiments on an optimal placement problem is carried out to verify the correctness of the theoretical results.
\end{abstract}

\keywords{Distributed aggregative optimization, multi-agent network, quantized communication,  linear convergence rate}

\classification{90C33,68W15}

\section{INTRODUCTION}
Distributed optimization has gained much research attention due to its wide applications in multi-agent network systems, such as resource allocation~\cite{yingyong1}, machine learning~\cite{yingyong21,yingyong2} and cloud computing~\cite{yingyong3}. In distributed optimization, each agent only uses local data and transmits information with its neighbors to minimize a global cost function cooperatively. To solve it, many efficient algorithms have been proposed, such as consensus-based algorithms~\cite{consensusbased1,consensusbased2,consensusbased3,consensusbased4,consensusbased5} and dual-decomposition-based algorithms~\cite{dualdecompose1,dualdecompose2,dualdecompose3}.

However, when the number of agents grows, neighboring communication is much slower than computation, which induces a bottleneck to run the above distributed optimization algorithm~\cite{communicationefficient}. 
Therefore, developing communication-efficient algorithms becomes a new research hotspot. 
Quantization techniques aim to compress information by reducing the number of bits per communication, and have been successfully applied to several large-scale engineering tasks recently~\cite{quantizationyingyong}. With regards to distributed optimization problems, the static quantization method has been utilized in~\cite{static1,static2}, which find the optimal solution with some fixed error bound. For removing the quantization errors,  the 3-bit dynamic quantization method with an adjustable quantization level was developed in~\cite{yipeng}. 
It enables distributed quantized subgradient algorithms to achieve exact optimal solutions. 
After that, various distributed quantized optimization algorithms have arisen, including distributed alternating direction method of multipliers with dynamic quantization methods~\cite{ADMMQ}, distributed quantized gradient tracking algorithms~\cite{GtrackingQ} and distributed subgradient descent algorithms with amplified-differential compression methods~\cite{CompressQ}.

It should be noted that the local cost function studied in the aforementioned works only depends on its own state variable, like the form as $f_{i}(x_{i})$ associated with inequality constraints $x_{i}=x_{j},~i\neq j$. However, in many practical applications, such as multi-agent formation control, optimal placement problem and transportation networks and formation control, local cost functions $f_{i}$ are not only determined by its own variable $x_{i}$ but also influenced by any other agents' variables $x_{j}$. Hence, a novel framework for distributed optimization, called aggregative optimization, was investigated in~\cite{aggregative1,aggregative2,lixiuxian}. The form of aggregative optimization is often described as follows, 
\begin{eqnarray}
	\begin{aligned}
		\min _{\boldsymbol{x} \in \mathbb{R}^{Nn}} f(\boldsymbol{x}) &:=\sum_{i=1}^{N} f_{i}\left(x_{i}, \chi(\boldsymbol{x})\right), \\
		\chi(\boldsymbol{x}) &:=\frac{\sum_{i=1}^{N} g_{i}\left(x_{i}\right)}{N},\label{primalproblem}
	\end{aligned}
\end{eqnarray}
where $\boldsymbol{x}=[x_{i}]_{i\in\{1,\cdots,N\}}$ is the global state variable with the local state variable $x_{i}\in{\mathbb{R}^{n}}$. The term $\chi(\boldsymbol{x}): \mathbb{R}^{Nn}\rightarrow \mathbb{R}^{r}$ is an aggregative variables associated with all agent's state variables. In the distributed aggregative optimization, the local cost function $f_{i}\left(x_{i}, \chi(\boldsymbol{x})\right): \mathbb{R}^{Nn}\rightarrow \mathbb{R}$ and $g_{i}(x_{i}): \mathbb{R}^{n}\rightarrow \mathbb{R}^{r}$ are only privately known by agent $i$. 

For solving the problem~\eqref{primalproblem}, the related work~\cite{lixiuxian} proposes the distributed aggregative gradient tracking (D-AGT) algorithm with infinity precision communication, however, it is not suitable for considering the communication bottleneck. 
So far as we know, distributed quantized algorithm for solving aggregative optimization in~\eqref{primalproblem} has not been proposed. 
The main difficulties of distributed aggregative optimization with quantization may lie in the following three points: i) Each local cost function $f_{i}$ depends on aggregative variables $\chi(\boldsymbol{x})$, while the gobal aggregative variables and their gradient can not be access to any individual agent $i$. ii) Only finite bits quantized variables are transmitted between neighboring agents, and it further provides less available information for each agent $i$. iii) Quantization introduces noise to algorithm updates and thus deteriorates convergence in general. Hence, designing a distributed quantized algorithm for solving aggregative optimization with communication bottleneck is challenging and meaningful.  Motivated by the above facts, the contributions of this paper are summarized as follows.
\begin{itemize}
	\item This paper studies distributed aggregative optimization problem \eqref{primalproblem}. Particularly, the local cost function of each agent depends both on its own variable and the aggregative variable, which is the global information that any individual agent can not know. This problem can cover more transitional optimization problems in~\cite{yingyong1,yingyong2,yingyong3,consensusbased1,consensusbased2,consensusbased3,consensusbased4}. Meanwhile, the communication bottleneck is also considered. The only finite number of bits are allowed to interact between the neighboring agents, making it possible to employ the practical bandwidth constraints situation rather than just the infinity communication~\cite{aggregative1,aggregative2,lixiuxian}.
		
	\item Based on the classical gradient descent algorithm and the quantization technique, we propose a distributed quantized aggregative gradient tracking (D-QAGT) algorithm for solving the problem \eqref{primalproblem}. The novel algorithm uses the variable tracking method to estimate the global aggregative term and its corresponding gradient such that it can converge to the optimal solution at the linear convergence rate. Furthermore, the proposed D-QAGT is a communication-efficient algorithm that significantly saving in the communication overhead in transmitted bits.
	
	\item A practical optimal placement problem is used in simulation to demonstrate the convergence of the D-QAGT algorithm. Numerical tests show that under $5$-bits transmission in each communication step, the proposed D-QAGT algorithm could keep the same linear convergence with the D-AGT algorithm in~\cite{lixiuxian}, which utilizes infinity precision communication.
\end{itemize}	

This paper is organized as follows. Section 2 introduces some related preliminaries
on basic notations, graph theory and formulates the distributed aggregative optimization problem via two examples.
Second 3 provides the proposed distributed algorithm and analyzes its convergence performance. Then, Section 4 gives a numerical experiment and Section 5 concludes the paper.

\section{PRELIMINARIES AND PROBLEM FORMULATION}
\subsection{Basic notations and notions}
Denote by $\mathbb{R}^{n}$ the set of real vectors with $n$-dimension, $\mathbb{R}^{n}_{+}$ the set of vectors with nonnegative coordinates of $n$-dimension and $\mathbb{R}^{n\times m}$ the set of real matrices with $n$-rows and $m$-columns. Denote by $\mathbb{N}$ an integer set. Let $\|\cdot\|$ and $\otimes$ be the standard Euclidean norm and the Kronecker product, respectively. Denote by $\mathbf{1}_{N}$ and $\mathbf{0}_{N}$ the column vectors of $N$ dimension with all entries being $1$ and $0$, respectively. Let $I_{n}$ be the compatible identity matrix with dimension $n$. For any vector $x,y\in{\mathbb{R}^{n}}$, let $x^{T}$ be the transpose of $x$. Denote by $[x_{i}]_{i\in \Omega}$ the column vector by stacking up $x_{i}$ associated with $i\in \Omega$. For any square matrix $H$, denote by $\rho(H)$ the spectral radius. 

A differentiable function $f(x): \mathbb{R}^{n}\rightarrow\mathbb{R}$ is called $\mu$-strongly convex for $\mu>0$ if for any $x,y\in \mathbb{R}^{n}$, $\langle \nabla_{x}f(x)-\nabla_{y}f(y),x-y\rangle\geq \mu\|x-y\|^2$. The gradient of the function $f(x)$ is Lipschitz continuous with constant $m>0$ if for any $x,y\in \mathbb{R}^{n}$, $\|\nabla_{x}f(x)-\nabla_{y}f(y)\|\leq m\|x-y\|$.

\subsection{Graph theory}
A directed and strongly connected graph of a multi-agent system is denoted by $\mathcal{G}=(\mathcal{E},\mathcal{V})$, where the node set $\mathcal{V}=\{1,\cdots,N\}$ and the edge set $\mathcal{E}\subseteq \mathcal{V}\times \mathcal{V}$. $(j,i)\in\mathcal{E}$ with $i\neq j$ means that agent $j$ can sent information to $i$. Denote $\mathcal{N}_{i}=\{j\in{\mathcal{V}}:(j,i)\in{\mathcal{E}}\}$ is the neighbor set of agent $i$. The adjacency matrix $A=(a_{ij})\in{R^{N\times N}}$ is defined by $a_{ij}>0$ if $(j,i)\in\mathcal{E}$, and $a_{ij}=0$ otherwise. Let $d_{i}=|N_{i}|$ denote the degree of node $i$ and $D$ be the $N\times N$ diagonal matrix such that $D_{ii}=d_{i}$. Then, the Laplacian matrix is denoted by $L=D-A$. It is necessary to display the following assumption and lemma. 

\begin{assumption}\label{assumption1}
The graph $\mathcal{G}$ is directed and strongly connected, and the adjacency matrix $A\in{\mathbb{R}^{N\times N}}$ is doubly stochastic, i.e, $\mathbf{1}_{N}^{T}A=\textbf{1}_{N}^{T}$ and $A\textbf{1}_{N}=\textbf{1}_{N}$.
\end{assumption}

\begin{lemma}{\cite{Lemma1}}\label{Lemma1}
Under Assumption~\ref{assumption1}, the following properties hold for the adjacency matrix.

i) $(A\otimes I_{n})(\frac{1}{N}\textbf{1}_{N}\textbf{1}_{N}^{T}\otimes I_{n})=(\frac{1}{N}\textbf{1}_{N}\textbf{1}_{N}^{T}\otimes I_{n})(A\otimes I_{n})=\frac{1}{N}\textbf{1}_{N}\textbf{1}_{N}^{T}\otimes I_{n}$.

ii) $\|A-\frac{1}{N}\textbf{1}_{N}\textbf{1}_{N}^{T}\|<1$.

iii) $\|A-I_{N}\|\leq 2.$

iv) $\|(A\otimes I_{n})x-(\frac{1}{N}\textbf{1}_{N}\textbf{1}_{N}^{T}\otimes I_{n})x\|\leq \|A-\frac{1}{N}\textbf{1}_{N}\textbf{1}_{N}^{T}\|\|x-(\frac{1}{N}\textbf{1}_{N}\textbf{1}_{N}^{T}\otimes I_{n})x\|$.
\end{lemma}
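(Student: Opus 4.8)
The plan is to treat the four claims with increasing effort, deriving (iv) from (i) and (ii) and isolating (ii) as the real work. Throughout I abbreviate $J:=\frac{1}{N}\mathbf{1}_N\mathbf{1}_N^T$ and use only the two double-stochasticity identities $A\mathbf{1}_N=\mathbf{1}_N$ and $\mathbf{1}_N^TA=\mathbf{1}_N^T$ together with the Kronecker mixed-product rule.

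Claim (i) is a direct computation: by the mixed-product rule $(A\otimes I_n)(J\otimes I_n)=(AJ)\otimes I_n$ and $(J\otimes I_n)(A\otimes I_n)=(JA)\otimes I_n$, while $AJ=\frac1N(A\mathbf{1}_N)\mathbf{1}_N^T=J$ and $JA=\frac1N\mathbf{1}_N(\mathbf{1}_N^TA)=J$; substituting gives $J\otimes I_n$ for both products. Claim (iii) follows from the triangle inequality once I record that double stochasticity forces $\|A\|\le 1$ in the spectral norm, since $\|A\|\le\sqrt{\|A\|_1\|A\|_\infty}=1$ for a nonnegative matrix whose row and column sums all equal $1$; then $\|A-I_N\|\le\|A\|+1\le 2$. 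For (iv) I would set $A_n:=A\otimes I_n$ and $J_n:=J\otimes I_n$, note from (i) that $A_nJ_n=J_n$ and that $J_n$ is an orthogonal projector, and exploit the cancellation $(A_n-J_n)(I-J_n)=A_n-J_n$. This gives $(A_n-J_n)x=(A_n-J_n)(I-J_n)x$, so submultiplicativity together with the identity $\|(A-J)\otimes I_n\|=\|A-J\|$ yields $\|A_nx-J_nx\|\le\|A-J\|\,\|x-J_nx\|$.

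The crux is (ii). My plan is to convert the spectral-norm bound into a singular-value statement: using $A^TJ=JA=J$ and $J^2=J$ one computes $(A-J)^T(A-J)=A^TA-J$, and since $A^TA$ is a symmetric doubly stochastic matrix with top eigenvalue $1$ and eigenvector $\mathbf{1}_N$, subtracting $J$ removes exactly that eigenvalue. Hence $\|A-J\|^2=\lambda_{\max}(A^TA-J)=\lambda_2(A^TA)=\sigma_2(A)^2$, so the claim reduces to $\sigma_2(A)<1$, i.e. to $1$ being a \emph{simple} eigenvalue of $A^TA$.

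This last step is the genuine obstacle, and it is where the graph hypothesis must enter. The implication fails from double stochasticity alone --- a directed cycle is doubly stochastic yet orthogonal, so $A^TA=I_N$ and $\sigma_2(A)=1$ --- hence I would use strong connectivity (together with the positivity of self-weights that the cited source implicitly assumes) to conclude that $A^TA$ is irreducible with positive diagonal, and therefore primitive. Perron--Frobenius then makes the Perron eigenvalue $1$ simple and strictly dominant, forcing $\lambda_2(A^TA)<1$ and completing (ii).
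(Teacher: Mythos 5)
Your proposal cannot be compared step-by-step with a proof in the paper, because the paper offers none: Lemma~\ref{Lemma1} is simply cited to the matrix-analysis textbook \cite{Lemma1}. Taken as a self-contained replacement, your argument is correct. Parts (i), (iii) and (iv) are handled exactly as one would hope: (i) is the mixed-product rule plus $A\mathbf{1}_N=\mathbf{1}_N$, $\mathbf{1}_N^TA=\mathbf{1}_N^T$; (iii) via $\|A\|\le\sqrt{\|A\|_1\|A\|_\infty}=1$ is legitimate for a doubly stochastic matrix; and in (iv) the cancellation $(A_n-J_n)(I-J_n)=A_n-J_n$ (which needs only $A_nJ_n=J_n$ from (i) and idempotence of $J_n$, not orthogonality of the projector) together with $\|(A-J)\otimes I_n\|=\|A-J\|$ gives the stated contraction inequality. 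The reduction of (ii) to $\sigma_2(A)<1$ through the identity $(A-J)^T(A-J)=A^TA-J$ is also clean and correct, since $A^TA$ is symmetric positive semidefinite with $\|A^TA\|\le 1$ and eigenvector $\mathbf{1}_N$ at eigenvalue $1$.

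Your sharpest observation is the cyclic-permutation counterexample, and it is warranted: under Assumption~\ref{assumption1} exactly as written (directed, strongly connected, doubly stochastic), part (ii) is \emph{false} --- the permutation matrix of a directed $N$-cycle satisfies the assumption yet has $A^TA=I_N$ and hence $\|A-\frac{1}{N}\mathbf{1}_N\mathbf{1}_N^T\|=1$. The paper silently relies on the convention, standard in this literature, that the weights have positive diagonal entries $a_{ii}>0$; with that patch your argument closes, since then the graph of $A^TA$ contains both the graph of $A$ and its reverse, so $A^TA$ is irreducible with positive diagonal. One small economy: you do not actually need primitivity or strict modulus dominance. Because $A^TA$ is symmetric positive semidefinite with all eigenvalues in $[0,1]$, irreducibility alone makes the Perron root $1$ a simple eigenvalue, which already forces $\lambda_2(A^TA)<1$ and hence $\|A-\frac{1}{N}\mathbf{1}_N\mathbf{1}_N^T\|<1$. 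In short: your proof is valid modulo an explicitly flagged strengthening of the hypothesis, and that flag identifies a genuine imprecision in the paper's statement rather than a gap in your reasoning.
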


\subsection{Problem formulations}

The goal of the agents is to collectively seek an optimal solution for aggregative optimization problem~\eqref{primalproblem}, whose global function is the sum of the local cost functions. It is different from the aggregative game~\cite{aggregativegame1,aggregativegame2}, where each agent aims to minimize its own local cost function. In most situations, the global performance of \eqref{primalproblem} may be better than that of the most related aggregative game, which is illustrated by the following example.

\textbf{Example 1:} (Bandwidth sharing problem) Consider $N$ users each wants to have part of a shared resource. Assume that the maximum capacity of a shared channel says $1$ and the strategy of each user $i,~i\in\{\mathcal{V}\}$ is to send $x_{i}$ units of flow along the channel for some value $x_{i}\in[0,1]$. If the total bandwidth $\sum_{i=1}^{N}x_{i}\geq1$, no user gets any benefit. If $\sum_{i=1}^{N}x_{i}<1$, then the local payoff function of each user $i$ is $f_{i}(x_{i}, \chi(\boldsymbol{x}))=x_{i}(1-\chi(\boldsymbol{x}))$ with $\chi(\boldsymbol{x})=\sum_{j\in{N}}x_{j}$. Based on~\cite{example2}, when each user aims to maximize its payoff, the optimal strategies for user $i,~i\in\{1,\cdots,N\}$ is $x_{i}^*=1/(N+1)$. Thus, the Nash equilibrium is $(1/(N+1),\cdots,1/(N+1))$. In this aggregative game, the local payoff of each user $i$ is $f_{i}=(1/(N+1))^2$ and the global payoff is $\sum_{i=1}^{N}f_{i}=N/(N+1)^2$.

However, If all agents cooperate to maximize the global payoff function, the optimal solution is computed as $\boldsymbol{x}^*=(1/2N,\cdots,1/2N)$. In this setting, the local payoff of each user $i$ is $f_{i}=1/4N$ and the global payoff is $\sum_{i=1}^{N}f_{i}=1/4>N/(N+1)^2$. This example indicates that all agents will perform better in a cooperative manner compared with the aggregative game in a noncooperative manner, which motivates us to study the aggregative optimization as~\eqref{primalproblem}.

Note that in the cooperation process among agents, neighboring communication by assuming infinite precision often incurs expensive communication bandwidth. In order to solve \eqref{primalproblem} with the cost of all communications as low as possible, we adopt quantized communication. The main idea of quantization is to map the input data to a countable set of code values. More specifically, we use a simple yet effective uniform quantizer to divide the input domain into the same size quantization cells, in which a unique code value represents all elements within a cell. For a pair of $(j,i)\in{\mathcal{E}}$, agent $j$ transmits this code value to its neighbors $i$, and the agents $i$ recover the state of agent $j$ based on received code value. To illustrate the problem \eqref{primalproblem} with quantized communication better, we present the following practical example.

\textbf{Example 2:} (Optimal placement problem) Consider $N$ agents to protect a target at position $p_{0}\in{\mathbb{R}_{+}^{2}}$. We hope that some weighted center of all agents could track the target $p_{0}$. Denote the position of agent $i$ by $x_{i}\in{\mathbb{R}_{+}^{2}}$ and the weight center of all agents by $\chi(\boldsymbol{x})=\sum_{i=1}^{N}\sqrt{Nx_{i}^2}/N$. Then the problem is cast as problem \eqref{primalproblem} with $\sum_{i=1}^{N}f_{i}(x_{i},\chi(\boldsymbol{x}))=\sum_{i=1}^{N}\alpha_{i}\|\chi(\boldsymbol{x})-p_{0}\|^2$, where $\alpha_{i}>0$ is the weight constant.

In the distributed framework, $\chi(\boldsymbol{x})$ is the global information and cannot be known directly for all agents. Hence, $\chi_{i}\in{\mathbb{R}^{2}},~i\in\{1,\cdots,N\}$ is leveraged for each agent $i$ to track the $\chi(\boldsymbol{x})$, and $\chi_{i}$ need to be interacted between neighbors over network graph $\mathcal{G}$. However, transmitting $\chi_{i}$ would consume amount of bandwidth resource, so we quantize $\chi_{i}$ before transmitting it. Particularly, by using quantizer $Q(\cdot)$, each coordinate of real value $\chi_{i}$ is mapped into a countable set of code values. i.e., $Q(\chi_{i}): \mathbb{R}^{2}_{+}\rightarrow \mathbb{N}_{+}^2$.  Then agent $i$ just transmits the code of quantized message $Q(\chi_{i})$ rather than that of the raw vectors $\chi_{i}$ such that the reduction of communication cost. More details on the design of the quantization scheme will be given subsequently.

To move forward, define $f(\boldsymbol{x},\boldsymbol{z})=\sum_{i=1}^{N}f_{i}(x_{i},z_{i}):\mathbb{R}^{Nn+Nr}\rightarrow \mathbb{R}$ for any $\boldsymbol{x}=[x_{i}]_{i\in\mathcal{V}}\in{\mathbb{R}^{Nn}}$ and $\boldsymbol{z}=[z_{i}]_{i\in\mathcal{V}}\in{\mathbb{R}^{Nr}}$. The gradients of $f(\boldsymbol{x},\boldsymbol{z})$ is defined by $\nabla_{\boldsymbol{x}}f(\boldsymbol{x},\boldsymbol{z})=[\nabla_{x_{i}}f_{i}(x_{i},z_{i})]_{i\in{\mathcal{V}}}$ and $\nabla_{\boldsymbol{z}}f(\boldsymbol{x},\boldsymbol{z})=[\nabla_{z_{i}}f_{i}(x_{i},z_{i})]_{i\in{\mathcal{V}}}$, respectively. We make the following necessary assumption.

 \begin{assumption}\label{assumption2}
 The functions arisen in \eqref{primalproblem} satisfy
 
 i) The cost function $f$ is differentiable, $\mu$-strongly convex and locally $l_{1}$-smooth.
 
 ii) $\nabla_{\boldsymbol{z}}f(\boldsymbol{x},\boldsymbol{z})$ is locally $l_{2}$-Lipschitz continuous.
 
 iii) For all $i\in{\mathcal{V}}$, $g_{i}(x_{i})$ are differentiable and $\nabla_{x_{i}}g_{i}(x_{i})\in{R^{n\times r}}$ is locally bounded, i.e., $\|\nabla_{x_{i}}g_{i}(x_{i})\|\leq \frac{l_{3}}{N}$ for some positive $l_{3}$.
\end{assumption}
Note that Property i) means that $\nabla f$ and
\begin{eqnarray}
\nabla_{\boldsymbol{x}}f(\boldsymbol{x},\boldsymbol{z})+\nabla_{\boldsymbol{x}}g(\boldsymbol{x})\mathbf{1}_{N}\otimes \frac{1}{N}\sum_{i=1}^{N}\nabla_{z_{i}}f_{i}(x_{i},z_{i}),\label{as}
\end{eqnarray}
are locally $l_{1}$-Lipschitz continuous with $g(\boldsymbol{x})=[g_{i}(x_{i})]_{i\in{\mathcal{V}}}: \mathbb{R}^{Nn}\rightarrow \mathbb{R}^{Nr}$. At the same time, Property iii) ensures that $\|\nabla_{\boldsymbol{x}}g(\boldsymbol{x})\|\leq l_{3}$.

\section{Main Result}
In this section, we present the design of distributed quantized algorithm and give the convergence analysis.
\subsection{Distributed quantized algorithm}

For solving the problem~\eqref{primalproblem}, each agent $j,~j\in{\mathcal{V}}$ holds two states $\chi_{j}\in{\mathbb{R}^{r}}$ and $y_{j}\in{\mathbb{R}^{r}}$ to track the average $\chi(\boldsymbol{x})$ and the gradient sum $\frac{1}{N}\sum_{j=1}^{N}\nabla_{\chi}f_{j}(x_{j},\chi(\boldsymbol{x}))$, respectively. Meanwhile, each agent quantizes $\chi_{j}$ and $y_{j}$ for the information interaction. Thus, each agent is associated with an encoder and its neighbors possess the corresponding decoder. Firstly, the uniform quantizer $q(\cdot)$ is introduced as follows.

\textbf{Quantizer:} A uniform quantizer is described by the function $q(\cdot): \mathbb{R}\rightarrow \mathbb{N}$, in which
\begin{eqnarray}
q(x)= \begin{cases}0, & \text { if }-\frac{1}{2} \leq x \leq \frac{1}{2}, \\ i, & \text { if } \frac{2 i-1}{2}<x \leq \frac{2 i+1}{2}, i=1, \cdots, L, \\ L, & \text { if } x>\frac{2 L+1}{2}, \\ -q(-x), & \text { if } x<-\frac{1}{2} .\end{cases}\label{quantizer}
\end{eqnarray}
In practice, it is not necessary to transmit any information when the output of the quantizer is zero, thus, the communication process of each agent is required to transmit $\lceil\log_{2}(2L)\rceil$ for the above $2L+1$-level quantizer \eqref{quantizer}. For clarify, define $Q(\mathbf{a})$ for the vector $\mathbf{a}=[a_{i}]_{i\in\{1,\cdots,N\}}\in \mathbb{R}^{N}$ by $Q(\mathbf{a})=[q(a_{i})]_{i\in\{1,\cdots,N\}}\in \mathbb{R}^{N}$.

Next, we design an encoder-decoder scheme for each pair of agents $(j,i),~j\in{\mathcal{N}_{i}}$. Agent $j$ quantizes its states $\chi_{j}$ and $y_{j}$, then transmits the code of $Q(\chi_{j})$ and $Q(y_{j})$ to its neighbors $i$. Agent $i$ receives the code of $Q(\chi_{j})$ and $Q(y_{j})$ and estimates the states of agent $j$ denoted by $\hat{\chi}_{j}$ and $\hat{y}_{j},~j\in{\mathcal{N}_{i}}$.

\begin{table}[H]
	\renewcommand\arraystretch{1.2}
	\label{dsmb}
	\centering
	\small
	\begin{tabular}{p{0.95\linewidth}}
		\toprule[1.5pt]
		{\textbf{Encoder:} The encoder is installed in agent $j,~j\in{\mathcal{V}}$.} \\
		\midrule[1pt]
	Agent $j$ generates $r$-dimensional quantized outputs $s_{\chi j}(k)$ and $s_{y j}(k)$, and then transmits them to neighbors $i$ for any $k\in{\mathbb{N}}$,
	\begin{eqnarray}
		&&s_{\chi j}(0)=Q\left(\frac{\chi_{j}(0)}{l(0)}\right),~~~~~s_{yj}(0)=Q\left(\frac{y_{j}(0)}{l(0)}\right),~~~~~l(k)=l(0)\gamma^{k},\\
			&&s_{\chi{j}}(k+1) :=Q\left(\dfrac{\chi_{j}(k+1)-\hat{\chi}_{j}(k)}{l(k+1)}\right),\label{ecdchi}\\
		&&s_{y j}(k+1) :=Q\left(\dfrac{y_{j}(k+1)-\hat{y}_{j}(k)}{l(k+1)}\right),\label{ecdy}
	\end{eqnarray}
	where $l(k)$ is the decaying scaling function with any positive initial constant $l(0)>0$. The rate of the scaling function $\gamma$ is given in the following~\eqref{gamma}. 
	\\
	\bottomrule[1.5pt]
	\end{tabular}
\end{table}

Agent $i$ receives quantized outputs $s_{\chi j}$ and $s_{y j}$ from its neighbors $j$, and then estimates its neighbors's states through a decoder defined as follows.

\begin{table}[H]
	\renewcommand\arraystretch{1.2}
	\label{dsmb}
	\centering
	\small
	\begin{tabular}{p{0.95\linewidth}}
		\toprule[1.5pt]
		{\textbf{Decoder:} The decoder is installed in agent $i,~j\in{\mathcal{N}_{i}}$.} \\
		\midrule[1pt]
		Agent $i$ receives $r$-dimensional quantized outputs $s_{\chi j}(k)$ and $s_{y j}(k)$ from its neighbors $j,~j\in{\mathcal{N}}_{i}$, and then recovers its neighbors' states $\hat{\chi}_{j}$ and $\hat{y}_{j}$ as follows,
		\begin{eqnarray}
			&&\hat{\chi}_{j}(0)=l(0)s_{\chi{j}}(0),~~~~~\hat{y}_{j}(0)=l(0)s_{y{j}}(0),\\
			&&\hat{\chi}_{j}(k+1)=l(k+1)s_{\chi j}(k+1)+\hat{\chi}_{j}(k),\label{dcdchi}\\
			&&\hat{y}_{j}(k+1)=l(k+1)s_{y j}(k+1)+\hat{y}_{j}(k),~k\in{\mathbb{N}},\label{dcdy}
		\end{eqnarray}
	where the design of $l(k)$ is the same as the encoder.\\
		\bottomrule[1.5pt]
	\end{tabular}
\end{table}
\begin{remark}
	It should be noted that all the agents possess the same scaling function $l(k)>0$. In dynamic quantized control, the scaling function represents the quantized precision and is designed as a decaying sequence to adaptively adjust the encoder. For convergence analysis, the scaling function must be designed carefully such that the agents gradually increase the accuracy of states recovery of its neighbors and the quantization error gradually decays to zero. 
\end{remark}

Based on the quantized communication associated with the above encoder-decoder pair, for $i \in{\mathcal{V}}$, the $i$th agent updates its real-valued state $x_{i}\in{\mathbb{R}^{n}}$ via the following distributed quantized algorithm.

\begin{table}[H]\label{A1}
	\renewcommand\arraystretch{1.2}
	\label{Algroithm1}
	\centering
	\small
	\begin{tabular}{p{0.95\linewidth}}
		\toprule[1.5pt]
		{\textbf{Algorithm 1:} Distributed Quantized Aggregative Gradient Tracking (D-QAGT)} \\
		\midrule[1pt]
		For any $k\in{\mathbb{N}}$, each agent $i,~i\in{\mathcal{V}}$ updates its states $x_{i}$, $\chi_{i}$ and $y_{i}$ as follows,
\begin{eqnarray}
			x_{i}(k+1) \!\!\!\!\!&=&\!\!\!\!\!\! x_{i}(k)-\alpha\left[\nabla_{x_{i}}f_{i}(x_{i}(k),\chi_{i}(k))+\nabla_{x_{i}} g_{i}(x_{i}(k))y_{i}(k)\right],\label{x}\\
			\chi_{i}(k+1) \!\!\!\!\!\!&=&\!\!\!\!\!\!\!\sum_{j=1}^{N}a_{ij}\hat{\chi}_{j}(k)+g_{i}(x_{i}(k+1))-g_{i}(x_{i}(k))+\chi_{i}(k)-\hat{\chi}_{i}(k),\label{chi}\\
			y_{i}(k+1) \!\!\!\!\!\!&=&\!\!\!\!\!\!\!\sum_{j=1}^{N}a_{ij}\hat{y}_{j}(k)\!\!+\!\!\nabla_{\chi_{i}}\!f_{i}(x_{i}(k\!+\!1),\chi_{i}(k\!\!+\!1)\!)\!\!-\!\!\nabla_{\chi_{i}}\!f_{i}(x_{i}(k),\chi_{i}(k))\!\!+\!y_{i}(k)\!\!-\!\hat{y}_{i}(k),\label{y}
		\end{eqnarray}
		where the stepsize $\alpha$ is given in the following~\eqref{alpha}. \\
		\bottomrule[1.5pt]
	\end{tabular}
\end{table}
The parameters in Algorithm 1 and Encoder-Decoder scheme are chosen as follows.
\begin{itemize}
\item The initial states $x_{i}(0)$, $\chi_{i}(0)$ and $y_{i}(0)$ satisfy

    i) $\|x_{i}(0)-x^*\|_{\infty}\leq c_{0}$.\\
    ii) $\chi_{i}(0)=g_{i}(x_{i}(0))$ and $\|\chi_{i}(0)\|_{\infty}\leq c_{1}$.\\
    iii) $y_{i}(0)=\nabla_{\chi_{i}}f_{i}(x_{i}(0),\chi_{i}(0))$ and $\|y_{i}(0)\|_{\infty}\leq c_{2}$.
\item The stepsize $\alpha$ satisfies
\begin{equation}
\alpha\in\left(0,\frac{\mu(1-\kappa)^2}{l_{3}(\mu+l_{1}+l_{2}l_{3})((1-\kappa)(l_{1}+l_{2}+l_{2}l_{3})+2l_{2}l_{3})}\right),\label{alpha}
\end{equation}
with the constant $\kappa=\|A-\frac{1}{N}\textbf{1}_{N}\textbf{1}_{N}^{T}\|<1$.
\item The rate of the scaling function $\gamma$ satisfies
\begin{equation}
	\gamma\in\left(\rho(H),1\right), \label{gamma}
\end{equation}
where $\rho(H)$ is the spectral radius of $H(\alpha)$ defined as
\begin{equation}
	H(\alpha)=\left[
	\begin{array}{ccc}
		(1-\mu\alpha) & \alpha l_{1} & \alpha l_{3}  \\
		\alpha l_{1}l_{3}(1+l_{3}) & \kappa+\alpha l_{1} l_{3} & \alpha l_{3}^2 \\
		\alpha l_{1}l_{2}(1+l_{3})^2 & \alpha l_{1}l_{2}(1+l_{3})+2l_{2} & \kappa+\alpha l_{2}l_{3}(1+l_{3}) \\
	\end{array}
	\right].\label{H}
\end{equation}
\end{itemize}

Noticing that D-QAGT algorithm relies on the quantized communication via using the estimated states $\hat{\chi}_{j}$ and $\hat{y}_{j},~j\in{\mathcal{N}}_{i}$. It is clear that D-QAGT algorithm merely requires discrete-time communication with finite bits at each round of communication, and it can resolve a finite bandwidth bottleneck. Regarding this fact, the D-QAGT saves the communication resource and broadens the range of application of D-AGT in~\cite{lixiuxian}.
 
The Q-DAGT algorithm combines the classical gradient descent algorithm, the variables tracking techniques and quantization communication methods. It is the first proposed to solve the aggregative network optimization problem with the communication bottleneck.
 
\subsection{Convergence analysis}

In this section, we present the convergence analysis for the D-QAGT algorithm. We first formulate a compact form of dynamics~\eqref{x}-\eqref{y} and then find a fixed point, which is also an optimal solution to problem~\eqref{primalproblem}. For explicit illustration, we introduce the following notations:
\begin{flalign}
		\boldsymbol{x}&=[x_{i}]_{i\in{\mathcal{V}}}\in{\mathbb{R}^{Nn}},~~~~~~~~~~~~~~~~~~~~~~~~~~~~~~~~~\boldsymbol{L}=(L\otimes I_{r})\in{\mathbb{R}^{Nr\times Nr}},\nonumber\\ \boldsymbol{\chi}&=[\chi_{i}]_{i\in{\mathcal{V}}}\in{\mathbb{R}^{Nr}},~~~~~~~~~~~~~~~~~~~~~~~~~~~~~~~~~ \boldsymbol{y}=[y_{i}]_{i\in{\mathcal{V}}}\in{\mathbb{R}^{Nr}}, \nonumber \\
		\bar{\chi}&=\frac{1}{N}\sum_{i=1}^{N}\chi_{i}\in{\mathbb{R}^{r}},~~~~~~~~~~~~~~~~~~~~~~~~~~~~~~~~~\bar{y}=\frac{1}{N}\sum_{i=1}^{N}y_{i}\in{\mathbb{R}^{r}},\nonumber\\
		\hat{\boldsymbol{\chi}}&=[\hat{\chi}_{i}]_{i\in{\mathcal{V}}}\in{\mathbb{R}^{Nr}},~~~~~~~~~~~~~~~~~~~~~~~~~~~~~~~~~\hat{\boldsymbol{y}}=[\hat{y}_{i}]_{i\in{\mathcal{V}}}\in{\mathbb{R}^{Nr}},\nonumber\\
		\boldsymbol{e}_{\boldsymbol{\chi}}&=\boldsymbol{\chi}-\hat{\boldsymbol{\chi}}\in{\mathbb{R}^{Nr}},~~~~~~~~~~~~~~~~~~~~~~~~~~~~~~~~\boldsymbol{e}_{\boldsymbol{y}}=\boldsymbol{y}-\hat{\boldsymbol{y}}\in{\mathbb{R}^{Nr}},\nonumber\\
		g(\boldsymbol{x})&=[g_{i}(x_{i})]_{i\in{\mathcal{V}}}\in{\mathbb{R}^{Nr}},~~~~~~~~~~~~~~~~~~~~\nabla_{\boldsymbol{x}} g(\boldsymbol{x})=[\nabla_{x_{i}} g_{i}(x_{i})]_{i\in{\mathcal{V}}}\in{\mathbb{R}^{Nn\times Nr}},\nonumber\\
		\nabla_{\boldsymbol{x}}f(\boldsymbol{x},\boldsymbol{\chi})&=[\nabla_{x_{i}}f_{i}(x_{i},\chi_{i})]_{i\in{\mathcal{V}}}\in{\mathbb{R}^{Nn}},~~~~~~
		\nabla_{\boldsymbol{\chi}}f(\boldsymbol{x},\boldsymbol{\chi})=[\nabla_{\chi_{i}}f_{i}(x_{i},\chi_{i})]_{i\in{\mathcal{V}}}\in{\mathbb{R}^{Nr}}.\nonumber
\end{flalign}
Then by using $\boldsymbol{L}=I_{Nr}-(A\otimes I_{r})$, the iterations~\eqref{x}-\eqref{y} in D-QAGT algorithm can be written as the following compact form
\begin{eqnarray}
	\boldsymbol{x}(k+1) \!\!\!\!&=&\!\!\!\! \boldsymbol{x}(k)-\alpha\left[\nabla_{\boldsymbol{x}}f(\boldsymbol{x}(k),\boldsymbol{\chi}(k))+\nabla_{\boldsymbol{x}} g(\boldsymbol{x}(k))\boldsymbol{y}(k)\right],\label{xx}\\
	\boldsymbol{\chi}(k+1)\!\!\!\!\!&=&\!\!\!\!\!\boldsymbol{L}\boldsymbol{e}_{\boldsymbol{\chi}}(k)+g(\boldsymbol{x}(k+1))-g(\boldsymbol{x}(k))+(A\otimes I_{r})\boldsymbol{\chi}(k),\label{chie}\\
	\boldsymbol{y}(k+1) \!\!\!\!\!&=&\!\!\!\!\!\boldsymbol{L}\boldsymbol{e}_{\boldsymbol{y}}(k)\!\!+\!\nabla_{\boldsymbol{\chi}}f(\boldsymbol{x}(k\!+\!1),\boldsymbol{\chi}(k\!\!+\!1))\!\!-\!\nabla_{\boldsymbol{\chi}}f(\boldsymbol{x}(k),\boldsymbol{\chi}(k))\!+\!(A\otimes I_{r})\boldsymbol{y}(k).\label{ye}
\end{eqnarray}

Next, we establish the equivalence of the fixed point of the D-QAGT algorithm and the optimal solution to the problem \eqref{primalproblem}.

\begin{lemma}\label{Lemma2}
Denote $\boldsymbol{x}^*$, $\boldsymbol{\chi}^*$ and $\boldsymbol{y}^*$ as the fixed points of \eqref{xx}-\eqref{ye}. Under Assumptions 1 and 2, $\boldsymbol{x}^*$ is the optimal solution to problem \eqref{primalproblem}.
\end{lemma}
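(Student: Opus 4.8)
The plan is to read off the three stationarity equations from \eqref{xx}--\eqref{ye}, exploit the double-stochasticity of $A$ to force the tracking variables into consensus, and then recognize the resulting condition as $\nabla F(\boldsymbol{x}^*)=0$ for the true objective $F(\boldsymbol{x}):=\sum_i f_i(x_i,\chi(\boldsymbol{x}))$ of \eqref{primalproblem}. First I would argue that at a fixed point the quantization errors vanish, $\boldsymbol{e}_{\boldsymbol{\chi}}^*=\boldsymbol{e}_{\boldsymbol{y}}^*=0$: since $\boldsymbol{\chi}$ and $\hat{\boldsymbol{\chi}}$ are both stationary while the decoder obeys $\hat{\chi}_j(k+1)-\hat{\chi}_j(k)=l(k+1)s_{\chi j}(k+1)$ with $l(k)=l(0)\gamma^{k}\to 0$, stationarity of $\hat{\boldsymbol{\chi}}$ forces $s_{\chi j}=0$ and hence $\hat{\boldsymbol{\chi}}^*=\boldsymbol{\chi}^*$, and likewise $\hat{\boldsymbol{y}}^*=\boldsymbol{y}^*$. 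Setting $\boldsymbol{e}^*=0$ in \eqref{chie} and \eqref{ye} and cancelling the $g$- and $\nabla_{\boldsymbol{\chi}}f$-differences leaves $(A\otimes I_r)\boldsymbol{\chi}^*=\boldsymbol{\chi}^*$ and $(A\otimes I_r)\boldsymbol{y}^*=\boldsymbol{y}^*$. Under Assumption~\ref{assumption1} the eigenvalue-one eigenspace of $A$ is $\mathrm{span}(\mathbf{1}_N)$ (consistent with $\kappa=\|A-\tfrac1N\mathbf{1}_N\mathbf{1}_N^{T}\|<1$ in Lemma~\ref{Lemma1}), so $\boldsymbol{\chi}^*=\mathbf{1}_N\otimes\bar{\chi}^*$ and $\boldsymbol{y}^*=\mathbf{1}_N\otimes\bar{y}^*$; i.e. the local trackers reach consensus.

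Next I would identify the consensus values by extracting the network averages. Left-multiplying \eqref{chie} and \eqref{ye} by $\tfrac1N(\mathbf{1}_N^{T}\otimes I_r)$ and using $(\tfrac1N\mathbf{1}_N^{T}\otimes I_r)\boldsymbol{L}=0$ together with $(\mathbf{1}_N^{T}A)\otimes I_r=\mathbf{1}_N^{T}\otimes I_r$ (both from double-stochasticity), the mixing and error terms drop out, yielding the conservation laws $\bar{\chi}(k+1)-\chi(\boldsymbol{x}(k+1))=\bar{\chi}(k)-\chi(\boldsymbol{x}(k))$ and $\bar{y}(k+1)-\tfrac1N\sum_i\nabla_{\chi_i}f_i(x_i(k+1),\chi_i(k+1))=\bar{y}(k)-\tfrac1N\sum_i\nabla_{\chi_i}f_i(x_i(k),\chi_i(k))$. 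The initializations $\chi_i(0)=g_i(x_i(0))$ and $y_i(0)=\nabla_{\chi_i}f_i(x_i(0),\chi_i(0))$ make both invariants zero, so for all $k$, and hence at the fixed point, $\bar{\chi}^*=\chi(\boldsymbol{x}^*)$ and $\bar{y}^*=\tfrac1N\sum_j\nabla_{\chi_j}f_j(x_j^*,\chi(\boldsymbol{x}^*))$. Combined with the consensus structure, this gives the exact tracking $\chi_i^*=\chi(\boldsymbol{x}^*)$ and $y_i^*=\bar{y}^*$ for every $i$.

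Finally I would substitute into the $\boldsymbol{x}$ stationarity equation $\nabla_{\boldsymbol{x}}f(\boldsymbol{x}^*,\boldsymbol{\chi}^*)+\nabla_{\boldsymbol{x}}g(\boldsymbol{x}^*)\boldsymbol{y}^*=0$ coming from \eqref{xx}. Written componentwise and with $\chi_i^*=\chi(\boldsymbol{x}^*)$, $y_i^*=\tfrac1N\sum_j\nabla_{\chi_j}f_j(x_j^*,\chi(\boldsymbol{x}^*))$ inserted, this reproduces exactly the aggregated gradient \eqref{as}, which by the chain rule (using $\nabla_{x_i}\chi=\tfrac1N\nabla_{x_i}g_i$) equals the true gradient $\nabla_{x_i}F(\boldsymbol{x}^*)$. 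Hence $\nabla F(\boldsymbol{x}^*)=0$, and since $f$ is $\mu$-strongly convex by Assumption~\ref{assumption2}, this stationary point is the unique global minimizer of \eqref{primalproblem}.

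I expect the main obstacle to be the two steps that hinge on the aggregative structure rather than the convexity endgame: establishing $\boldsymbol{e}^*=0$ rigorously, and above all the conservation-law/consensus argument that pins $\chi_i^*$ to the true aggregate $\chi(\boldsymbol{x}^*)$ and $y_i^*$ to the average of the $\nabla_{\chi}f_j$. Once exact tracking is in hand, matching the per-agent condition with the chain-rule gradient \eqref{as} is routine but essential bookkeeping confirming that the tracking-plus-consensus design recovers $\nabla F$.
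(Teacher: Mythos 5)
Your proof is correct and takes essentially the same route as the paper's: left-multiplying \eqref{chie}--\eqref{ye} by $\frac{1}{N}\mathbf{1}_{Nr}^{T}$ and using the initializations to get $\bar{\chi}(k)=\chi(\boldsymbol{x}(k))$ and $\bar{y}(k)=\frac{1}{N}\sum_{i}\nabla_{\chi_i}f_i(x_i(k),\chi_i(k))$, deducing consensus of $\boldsymbol{\chi}^*$ and $\boldsymbol{y}^*$ at the fixed point, and then matching the $\boldsymbol{x}$-stationarity condition with the aggregated gradient \eqref{as} to conclude $\nabla_{\boldsymbol{x}}f(\boldsymbol{x}^*)=\mathbf{0}_{Nn}$ and optimality by convexity. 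Your one departure is an improvement in rigor: you derive $\boldsymbol{e}_{\boldsymbol{\chi}}^*=\boldsymbol{e}_{\boldsymbol{y}}^*=\mathbf{0}_{Nr}$ from stationarity of the decoder together with $l(k)\to 0$, whereas the paper simply asserts that the quantization errors vanish at the fixed point.
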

\begin{proof}
Based on Assumption~\ref{assumption1} on $\mathbf{1}_{Nr}^{T}\boldsymbol{L}=\mathbf{0}_{N}^{T}$, we multiply $\frac{1}{N}\mathbf{1}_{Nr}^{T}$ on both side of \eqref{chie} and \eqref{ye} to obtain that
\begin{eqnarray}
	\bar{\chi}(k+1)\!\!\!&=&\!\!\!\bar{\chi}(k)+\frac{1}{N}\sum_{i=1}^{N}g_{i}(x_{i}(k+1))-\frac{1}{N}\sum_{i=1}^{N}g_{i}(x_{i}(k)),\label{barchi}\\
	\bar{y}(k+1) \!\!\!&=&\!\!\!\!\bar{y}(k)\!+\!\frac{1}{N}\sum_{i=1}^{N}\nabla_{\chi_{i}}f_{i}(x_{i}(k\!+\!1),\chi_{i}(k\!\!+\!1))\!\!-\!\frac{1}{N}\sum_{i=1}^{N}\nabla_{\chi_{i}}f_{i}(x_{i}(k),\chi_{i}(k)).
\end{eqnarray}
It follows from a simple recursion that
\begin{eqnarray}
	\bar{\chi}(k)\!-\!\frac{1}{N}\sum_{i=1}^{N}g_{i}(x_{i}(k))\!\!\!&=&\!\!\!\bar{\chi}(0)\!-\!\frac{1}{N}\sum_{i=1}^{N}g_{i}(x_{i}(0)),\label{barchi2}\\
	\bar{y}(k)\!-\!\frac{1}{N}\sum_{i=1}^{N}\nabla_{\chi_{i}}f_{i}(x_{i}(k),\chi_{i}(k))\! \!\!\!&=&\!\!\!\!\bar{y}(0)\!-\!\frac{1}{N}\sum_{i=1}^{N}\nabla_{\chi_{i}}f_{i}(x_{i}(0),\chi_{i}(0)).\label{bary2}
\end{eqnarray}
Combing with $\chi_{i}(0)=g_{i}(x_{i}(0))$ and $y_{i}(0)=\nabla_{\chi_{i}}f_{i}(x_{i}(0),\chi_{i}(0))$,
 \begin{eqnarray}
 	\bar{\chi}(k)\!&=&\!\frac{1}{N}\sum_{i=1}^{N}g_{i}(x_{i}(k))=\chi(\boldsymbol{x}),\label{barchi3}\\
 	\bar{y}(k)&=&\frac{1}{N}\sum_{i=1}^{N}\nabla_{\chi_{i}}f_{i}(x_{i}(k),\chi_{i}(k)).\label{bary3}
 \end{eqnarray}
By substituting the fixed point $\boldsymbol{x}^*$, $\boldsymbol{\chi}^*$ and $\boldsymbol{y}^*$ into \eqref{xx}-\eqref{ye},
\begin{eqnarray}
	\nabla_{\boldsymbol{x}}f(\boldsymbol{x}^*,\boldsymbol{\chi}^*)+\nabla_{\boldsymbol{x}} g(\boldsymbol{x}^*)\boldsymbol{y}^*=\mathbf{0}_{Nn},\label{x*}\\
	\boldsymbol{L}\boldsymbol{\chi}^*=\mathbf{0}_{Nr},~~\boldsymbol{L}\boldsymbol{y}^*=\mathbf{0}_{Nr},\label{y*}
\end{eqnarray}
where $\boldsymbol{e}_{\boldsymbol{\chi}^*}=\boldsymbol{e}_{\boldsymbol{y}^*}=\mathbf{0}_{Nr}$ are used. Noticing from \eqref{y*} that there exist $\chi_{i}^*=\chi_{j}^*=\chi^*$ and $y_{i}^*=y_{j}^*=y^*$. Meanwhile, in view of \eqref{barchi3}-\eqref{bary3},
 \begin{eqnarray}
	\chi^*\!&=&\bar{\chi}^*=\!\frac{1}{N}\sum_{i=1}^{N}g_{i}(x_{i}^*)=\chi(\boldsymbol{x}^*),\label{barchi4}\\
	y^*&=&\bar{y}^*=\frac{1}{N}\sum_{i=1}^{N}\nabla_{\chi}f_{i}(x_{i}^*,\chi(\boldsymbol{x}^*)).\label{bary4}
\end{eqnarray}
Based on Assumption~\ref{assumption2}, we compute the gradient of $f(\boldsymbol{x})$ in $\boldsymbol{x}^*$ as follows,
\begin{eqnarray}
	\nabla_{\boldsymbol{x}}f(\boldsymbol{x}^*)&=&\nabla_{\boldsymbol{x}}f(\boldsymbol{x}^*,\mathbf{1}_{N}\otimes\chi^*)+\nabla_{\boldsymbol{x}}g(\boldsymbol{x}^*)[\mathbf{1}_{N}\otimes\frac{1}{N}\sum_{i=1}^{N}\nabla_{\chi_{i}}f_{i}(x_{i}^*,\chi_{i}^*)],\nonumber\\
	&=&\nabla_{\boldsymbol{x}}f(\boldsymbol{x}^*,\mathbf{1}_{N}\otimes\chi^*)+\nabla_{\boldsymbol{x}}g(\boldsymbol{x}^*)\left[\mathbf{1}_{N}\otimes y^*\right],\nonumber\\
	&=& \mathbf{0}_{Nn},\label{graf}
\end{eqnarray}
where \eqref{barchi4} and \eqref{bary4} are used in the second equality and \eqref{x*} is used in the third equality. \eqref{graf} implies that $\boldsymbol{x}^*$ is the optimal solution to problem \eqref{primalproblem}.
\end{proof}
\begin{remark}
The optimal condition~\eqref{y*} implies that the fixed points $\boldsymbol{\chi}^*=1_{N}\otimes\chi^*$ and $\boldsymbol{y}^*=1_{N}\otimes y^*$. Meanwhile, \eqref{barchi4}-\eqref{bary4} means that $\boldsymbol{\chi}^*=1_{N}\otimes \chi(\boldsymbol{x}^*)$ and $\boldsymbol{y}^*=1_{N}\otimes \frac{1}{N}\sum_{i=1}^{N}\nabla_{\chi}f_{i}(x_{i}^*,\chi(\boldsymbol{x}^*))$. Hence, in the D-QAGT algorithm, $\chi_{i}(k)$ is leveraged for agent $i$ to track the global information $\chi(\boldsymbol{x})$ and $y_{i}(k)$ is leveraged for agent $i$ to seek the gradient sum $\frac{1}{N}\sum_{i=1}^{N}\nabla_{\chi}f_{i}(x_{i},\chi(\boldsymbol{x}))$.
\end{remark}

Then, the following lemmas give the convergence analysis framework of the D-QAGT algorithm with two intermediate results.
\begin{lemma}\label{Lemma3}
	Define $\Theta(k)=(\|\boldsymbol{x}(k)-\boldsymbol{x}^*\|;\|\boldsymbol{\chi}(k)-\mathbf{1}_{N}\otimes\bar{\chi}(k)\|;\|\boldsymbol{y}(k)-\mathbf{1}_{N}\otimes\bar{y}(k)\|)$ and $E(k)=[0;2\|\boldsymbol{e}_{\boldsymbol{\chi}}(k)\|;2l_{2}\|\boldsymbol{e}_{\boldsymbol{\chi}}(k)\|+2\|\boldsymbol{e}_{\boldsymbol{y}}(k)\|]$. Under Assumptions \ref{assumption1} and \ref{assumption2} and considering the iterations on \eqref{x}-\eqref{y}, then the following inequality holds
	\begin{eqnarray}
		\Theta(k+1)\leq H(\alpha)\Theta(k)+E(k).\label{lemma33}
	\end{eqnarray}
Furthermore, the spectral radius $\rho(H)<1$.
\end{lemma}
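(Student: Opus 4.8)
The plan is to prove the vector recursion \eqref{lemma33} by establishing three scalar inequalities, one for each coordinate of $\Theta(k)$, and then to verify $\rho(H)<1$ by exhibiting a positive test vector. Throughout I would work from the compact dynamics \eqref{xx}--\eqref{ye}, use the identity $\bar{\chi}(k)=\chi(\boldsymbol{x}(k))$ already derived in the proof of Lemma~\ref{Lemma2}, and exploit that the optimal triple satisfies \eqref{x*}--\eqref{y*} so that the exact aggregative gradient vanishes at $\boldsymbol{x}^*$.

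First I would bound $\|\boldsymbol{x}(k+1)-\boldsymbol{x}^*\|$. Writing the $\boldsymbol{x}$-update \eqref{xx} as an exact gradient step plus an inexactness term, I would add and subtract the true gradient so that $\boldsymbol{x}(k+1)-\boldsymbol{x}^*=[\boldsymbol{x}(k)-\boldsymbol{x}^*-\alpha\nabla f(\boldsymbol{x}(k))]-\alpha R(k)$, where the remainder $R(k)$ collects the effect of using $\boldsymbol{\chi}(k)$ and $\boldsymbol{y}(k)$ in place of their consensual values $\mathbf{1}_N\otimes\bar{\chi}(k)$ and $\mathbf{1}_N\otimes\bar{y}(k)$. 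Strong convexity with $l_1$-smoothness of the map in \eqref{as} (Assumption~\ref{assumption2}), together with $\nabla f(\boldsymbol{x}^*)=\mathbf{0}$ from Lemma~\ref{Lemma2}, gives the contraction $\|\boldsymbol{x}(k)-\boldsymbol{x}^*-\alpha\nabla f(\boldsymbol{x}(k))\|\le(1-\mu\alpha)\|\boldsymbol{x}(k)-\boldsymbol{x}^*\|$ for $\alpha$ in the range \eqref{alpha}, while $\|R(k)\|$ is controlled by $\|\boldsymbol{\chi}(k)-\mathbf{1}_N\otimes\bar{\chi}(k)\|$ and $\|\boldsymbol{y}(k)-\mathbf{1}_N\otimes\bar{y}(k)\|$ through the constants $l_1,l_2$ and $\|\nabla_{\boldsymbol{x}}g\|\le l_3$. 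These estimates assemble into the first row $[(1-\mu\alpha),\,\alpha l_1,\,\alpha l_3]$ of $H(\alpha)$.

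Next I would treat the two tracking errors. For each, the averaging step $(A\otimes I_r)$ contracts disagreement by the factor $\kappa=\|A-\tfrac1N\mathbf{1}_N\mathbf{1}_N^{T}\|<1$ via Lemma~\ref{Lemma1}(iv), after subtracting the mean; this is legitimate because $\mathbf{1}_{Nr}^{T}\boldsymbol{L}=\mathbf{0}$ keeps both the quantization term $\boldsymbol{L}\boldsymbol{e}$ and the increment terms inside the disagreement subspace. The driving perturbations are the consecutive increments $g(\boldsymbol{x}(k+1))-g(\boldsymbol{x}(k))$ and $\nabla_{\boldsymbol{\chi}}f(\boldsymbol{x}(k+1),\boldsymbol{\chi}(k+1))-\nabla_{\boldsymbol{\chi}}f(\boldsymbol{x}(k),\boldsymbol{\chi}(k))$, which I would bound by $l_3\|\boldsymbol{x}(k+1)-\boldsymbol{x}(k)\|$ and $l_2(\|\boldsymbol{x}(k+1)-\boldsymbol{x}(k)\|+\|\boldsymbol{\chi}(k+1)-\boldsymbol{\chi}(k)\|)$ respectively. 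Since $\|\boldsymbol{x}(k+1)-\boldsymbol{x}(k)\|=\alpha\|\nabla_{\boldsymbol{x}}f(\boldsymbol{x}(k),\boldsymbol{\chi}(k))+\nabla_{\boldsymbol{x}}g(\boldsymbol{x}(k))\boldsymbol{y}(k)\|$ is bounded in the three coordinates of $\Theta(k)$ using \eqref{x*} and the same constants, and $\|\boldsymbol{\chi}(k+1)-\boldsymbol{\chi}(k)\|$ unwinds through \eqref{chie} into those quantities plus $2\|\boldsymbol{e}_{\boldsymbol{\chi}}(k)\|$ and $2\|\boldsymbol{\chi}(k)-\mathbf{1}_N\otimes\bar{\chi}(k)\|$ (using $\|\boldsymbol{L}\|\le2$ from Lemma~\ref{Lemma1}(iii)), the second and third rows of $H(\alpha)$ assemble. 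The constant $2$ in $E(k)$ is exactly the bound $\|\boldsymbol{L}\|\le2$, and the $\alpha$-independent $+2l_2$ in the $(3,2)$ entry is the image of the $2\|\boldsymbol{\chi}(k)-\mathbf{1}_N\otimes\bar{\chi}(k)\|$ term under the $l_2$-Lipschitz estimate.

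Finally, for $\rho(H)<1$ I would note that $H(\alpha)$ is entrywise nonnegative once $\mu\alpha<1$, so the Perron--Frobenius / M-matrix criterion applies: it suffices to produce a strictly positive vector $v=(v_1,v_2,v_3)$ with $H(\alpha)v<v$ componentwise. I would first pick $v_3$ large relative to $v_2$ so that $(1-\kappa)v_3>2l_2v_2$, absorbing the $\alpha$-independent coupling in the third row; then $v_1$ large enough that $\mu v_1>l_1v_2+l_3v_3$ to contract the first coordinate; and finally read the two remaining rows as upper bounds on $\alpha$. I expect the main obstacle to lie exactly here: at $\alpha=0$ the matrix $H(0)$ has spectral radius $1$ (its eigenvalues are $1,\kappa,\kappa$ by triangularity), so the claim is a genuine perturbation statement, and the delicate part is checking that the three inequalities $H(\alpha)v<v$ hold simultaneously. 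Taking the worst of the resulting upper bounds on $\alpha$ is what should reproduce the stated interval \eqref{alpha}, and matching that algebra exactly is the most error-prone step of the argument.
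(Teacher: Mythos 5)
Your derivation of the recursion \eqref{lemma33} is essentially the paper's own: the same splitting of the $\boldsymbol{x}$-update into an exact gradient step (contracting by $1-\mu\alpha$, with $\nabla_{\boldsymbol{x}}f(\boldsymbol{x}^*)=\mathbf{0}$ from Lemma~\ref{Lemma2}) plus a remainder controlled by the two disagreement norms, the same $\kappa$-contraction of the averaged tracking iterates via Lemma~\ref{Lemma1}, the bound $\|I_{N}-A\|\leq 2$ producing the factors $2$ and $2l_{2}$ in $E(k)$, and the same unwinding of $\|\boldsymbol{x}(k+1)-\boldsymbol{x}(k)\|$ and $\|\boldsymbol{\chi}(k+1)-\boldsymbol{\chi}(k)\|$ to assemble the rows of $H(\alpha)$ in \eqref{H}. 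Where you genuinely depart from the paper is the claim $\rho(H)<1$. The paper treats it as an eigenvalue-perturbation statement: $1$ is a simple eigenvalue of $H(0)$ with eigenvector $[1;0;0]$, the derivative $\frac{d\lambda(\alpha)}{d\alpha}\big|_{\alpha=0}=-\mu<0$ forces $\rho(H(\alpha))<1$ for small $\alpha>0$, and the critical stepsize is then located by solving $\det(I-H(\alpha))=0$, whose smallest positive root is exactly the right endpoint of \eqref{alpha}. You instead propose the Perron--Frobenius test-vector criterion: exhibit $v>0$ with $H(\alpha)v<v$, which gives $\rho(H(\alpha))<1$ via the weighted max norm and needs only nonnegativity of $H(\alpha)$ (guaranteed once $\mu\alpha<1$); since $H(\alpha)$ is in fact entrywise positive for $\alpha>0$, the criterion is also necessary (take $v=(I-H)^{-1}\mathbf{1}$), so your route can in principle be completed. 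It is the more elementary and self-contained argument, avoiding the citations to external perturbation lemmas on which the paper leans. What it costs is sharpness: your staged choice ($v_{3}$ against $v_{2}$ to beat the $\alpha$-free $2l_{2}$ coupling, then $v_{1}$ against $l_{1}v_{2}+l_{3}v_{3}$) readily yields $\rho(H(\alpha))<1$ for all sufficiently small $\alpha$, but certifying the \emph{whole} interval \eqref{alpha} requires optimizing over $v$ jointly rather than sequentially --- enlarging $v_{1}$ to satisfy the first-row condition shrinks the $\alpha$-bounds from rows two and three --- and the supremum of admissible $\alpha$ over all positive test vectors is precisely the root of $\det(I-H(\alpha))=0$, i.e., you are led back to the determinant computation the paper performs directly. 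You correctly flag this balancing as the delicate step; it is exactly where the paper's closed-form threshold does the work.
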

\begin{proof}
	See Appendix~\ref{a51}.
\end{proof}

The $3$-dimensions vector $\Theta(k)$ describes the distance between the iteration $\boldsymbol{x}(k)$, $\boldsymbol{\chi}(k)$, $\boldsymbol{y}(k)$ and the fixed point $\boldsymbol{x}^*$, $\boldsymbol{\chi}^*$, $\boldsymbol{y}^*$, respectively. Lemma \ref{Lemma3} provides the upper bound of $\Theta(k)$, which is related to the stepsize matrix $H(\alpha)$ and the quantization error vector $E(k)$. Hence, we present the following lemma for analyzing the convergence of $\|H^{k}(\alpha)\|$. 
\begin{lemma}\label{Lemma4}
Under Assumptions \ref{assumption1} and \ref{assumption2} and considering the iterations on \eqref{x}-\eqref{y}, for any $\epsilon\in(0,\min(\gamma-\rho(H),2\|H(\alpha)\|))$, the following inequality holds
	\begin{equation}
		\|H^{k}(\alpha)\|\leq c_{3}(\rho(H)+\epsilon)^{k},\label{lemma34}
	\end{equation}
where the constant $c_{3}=3\sqrt{3}\text{max}\{\frac{4\|H(\alpha)\|^2}{\epsilon^2},\frac{\epsilon^2}{4\|H(\alpha)\|^2}\}$.
\end{lemma}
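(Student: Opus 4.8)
The plan is to show that the powers of the fixed $3\times 3$ matrix $H(\alpha)$ decay at the geometric rate set by its spectral radius, the slack $\epsilon$ serving to absorb the non-normal (nilpotent) contributions into a clean geometric factor; the asymptotic version of this statement is just Gelfand's formula $\rho(H)=\lim_{k\to\infty}\|H^{k}(\alpha)\|^{1/k}$, and the work is to make it uniform in $k$ with the explicit prefactor $c_{3}$. First I would apply Schur's unitary triangularization $H(\alpha)=UTU^{*}$ with $U$ unitary and $T=D+N$ upper triangular, where $D=\mathrm{diag}(\lambda_{1},\lambda_{2},\lambda_{3})$ collects the eigenvalues (so $\|D\|=\rho(H)$) and $N$ is the strictly upper-triangular, hence nilpotent, remainder. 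Since the spectral $2$-norm is unitarily invariant, $\|H^{k}(\alpha)\|=\|T^{k}\|$, which removes $U$ from the analysis. Each entry of $N$ equals an entry of $T=U^{*}HU$ and is therefore bounded in magnitude by $\|H(\alpha)\|$; since a $3\times 3$ strictly upper-triangular matrix has only three nonzero entries, passing through the Frobenius norm gives $\|N\|\le\sqrt{3}\,\|H(\alpha)\|$.

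Next I would shrink the nilpotent part by the diagonal similarity $S=\mathrm{diag}(1,s,s^{2})$ with a parameter $s\in(0,1)$ to be chosen. Conjugation scales the $(i,j)$ entry of $N$ by $s^{j-i}\le s$, so $\tilde{N}:=S^{-1}NS$ satisfies $\|\tilde{N}\|\le s\sqrt{3}\,\|H(\alpha)\|$, while $D$ is unchanged; hence $\tilde{T}:=S^{-1}TS=D+\tilde{N}$ obeys $\|\tilde{T}\|\le\rho(H)+s\sqrt{3}\,\|H(\alpha)\|$. Choosing $s$ of order $\epsilon/\|H(\alpha)\|$ — concretely $s=\epsilon/(2\|H(\alpha)\|)$, which lies in $(0,1)$ precisely because $\epsilon<2\|H(\alpha)\|$ — forces $\|\tilde{T}\|\le\rho(H)+\epsilon$. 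Submultiplicativity then yields $\|\tilde{T}^{k}\|\le(\rho(H)+\epsilon)^{k}$, and transferring back through $T^{k}=S\tilde{T}^{k}S^{-1}$ gives
\[
\|H^{k}(\alpha)\|=\|T^{k}\|\le\|S\|\,\|S^{-1}\|\,(\rho(H)+\epsilon)^{k}.
\]

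Finally I would collect the constant. For $s\in(0,1)$ one has $\|S\|=1$ and $\|S^{-1}\|=s^{-2}$, so $\|S\|\,\|S^{-1}\|=s^{-2}=4\|H(\alpha)\|^{2}/\epsilon^{2}$ for the above choice of $s$, while the residual dimensional factor $3\sqrt{3}$ arises from the Frobenius-to-operator passage on the $3\times 3$ block together with the slack in selecting $s$. Writing the prefactor symmetrically as $\max\{4\|H(\alpha)\|^{2}/\epsilon^{2},\ \epsilon^{2}/(4\|H(\alpha)\|^{2})\}$ keeps the estimate valid irrespective of whether the effective scaling is below or above $1$; under the standing hypothesis $\epsilon<2\|H(\alpha)\|$ the first branch is active and recovers exactly $c_{3}$. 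The companion constraint $\epsilon<\gamma-\rho(H)$ plays no role in this inequality itself, but it guarantees $\rho(H)+\epsilon<\gamma<1$, so that the rate is a genuine contraction for the convergence argument that follows. I expect the main obstacle to be the bookkeeping of $c_{3}$ — pinning down the dimensional factor $3\sqrt{3}$ and the correct power of $s$ in $\|S\|\,\|S^{-1}\|$ — rather than the structural argument, which is the standard Schur-plus-diagonal-scaling contraction of the Jordan part.
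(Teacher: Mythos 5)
Your proof is correct and takes essentially the same route as the paper: both rest on Schur unitary triangularization followed by the diagonal similarity $\mathrm{diag}(1,\eta,\eta^{2})$ with $\eta=\epsilon/(2\|H(\alpha)\|)$, where the hypothesis $\epsilon<2\|H(\alpha)\|$ guarantees $\eta\in(0,1)$ and $\epsilon<\gamma-\rho(H)$ serves only to make $\rho(H)+\epsilon<\gamma<1$ downstream. The one inaccuracy is your closing guess about the origin of $3\sqrt{3}$: in your spectral-norm execution (unitary invariance plus the Frobenius bound $\|N\|\le\sqrt{3}\,\|H(\alpha)\|$) no such factor is needed --- you obtain the sharper prefactor $4\|H(\alpha)\|^{2}/\epsilon^{2}\le c_{3}$, so the stated bound holds a fortiori --- whereas in the paper that factor comes from running the whole estimate in the $\infty$-norm, via $\|U\|_{\infty}\|U^{H}\|_{\infty}\le 3$ and the conversion $\|H^{k}(\alpha)\|\le\sqrt{3}\,\|H^{k}(\alpha)\|_{\infty}$.
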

\begin{proof}
	See Appendix~\ref{a52}.
\end{proof}

Note that the chosen of the constant $\epsilon$ can ensure $\rho(H)+\epsilon<1$ due to $\gamma\in(\rho(H),1)$. Then Lemma~\ref{Lemma3} provides the linear convergence of $\|H^{k}(\alpha)\|$. Summarizing the above lemmas, we give the convergence of the D-QAGT algorithm in the following theorem. 

\begin{theorem}\label{theorem1}
Under Assumptions \ref{assumption1} and \ref{assumption2}, if the number of the quantization levels
\begin{flalign}
	2L+1\geq 2\max\{\frac{\zeta C_0+3C_1}{l_0\gamma},\frac{\sqrt{4c_1^2+4c_2^2}}{l_0}\}+1,\label{L}
\end{flalign}
with
\begin{flalign}
&\zeta=\max\{\alpha l_{1}l_2(1+l_{3})+\alpha l_{1}l_3(1+l_{3}),l_2+\alpha l_{1}l_2+\alpha l_{1}l_3+2,\alpha l_{3}l_2+\alpha l_{3}^2+2\},\\
&C_{0}=c_{3}\sqrt{Nnc_{0}^2+4Nr(c_{1}^2+c_{2}^2)}+\frac{c_{3}C_{1}}{\gamma-\rho(M)-\epsilon},\\
&C_{1}=2(l_{2}+1)\sqrt{Nr}l_{0},
\end{flalign}
and $\epsilon\in(0,\min(\gamma-\rho(H),2\|H(\alpha)\|))$, then $\boldsymbol{x}=[x_{i}]_{i\in{\mathcal{V}}}$ generated by the D-QAGT algorithm can converge to the optimizer of problem \eqref{primalproblem} at a linear convergence rate.
\end{theorem}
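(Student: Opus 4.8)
The plan is to run a coupled induction that simultaneously establishes the geometric decay of the tracking/consensus errors $\Theta(k)$ and the fact that none of the quantizers ever saturate. The linchpin is the following property of the scaled uniform quantizer: provided the scaled input stays in the unsaturated region, i.e. each coordinate of $(\chi_j(k+1)-\hat\chi_j(k))/l(k+1)$ and of $(y_j(k+1)-\hat y_j(k))/l(k+1)$ has absolute value at most $(2L+1)/2$, the decoded value obeys $\|\boldsymbol{e}_{\boldsymbol{\chi}}(k+1)\|\le \tfrac{\sqrt{Nr}}{2}\,l(k+1)$ and $\|\boldsymbol{e}_{\boldsymbol{y}}(k+1)\|\le \tfrac{\sqrt{Nr}}{2}\,l(k+1)$, since the per-coordinate rounding error of $q(\cdot)$ is bounded by $1/2$. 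Because $l(k)=l_0\gamma^k$, this converts the quantization-error vector into a geometrically vanishing disturbance, $\|E(k)\|\le C_1\gamma^k$ with $C_1=2(l_2+1)\sqrt{Nr}\,l_0$, turning the additive term in Lemma~\ref{Lemma3} into a controllable quantity.

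First I would impose, as an induction hypothesis, that no saturation has occurred through step $k$, so the above error bound holds at every step up to $k$. Unrolling the linear recursion $\Theta(k+1)\le H(\alpha)\Theta(k)+E(k)$ of Lemma~\ref{Lemma3} gives $\Theta(k)\le H^k(\alpha)\Theta(0)+\sum_{t=0}^{k-1}H^{k-1-t}(\alpha)E(t)$. Applying $\|H^k(\alpha)\|\le c_3(\rho(H)+\epsilon)^k$ from Lemma~\ref{Lemma4} together with $\|E(t)\|\le C_1\gamma^t$, and using $\rho(H)+\epsilon<\gamma<1$ from the choice of $\epsilon$ and the rate~\eqref{gamma}, the geometric sum $\sum_{t=0}^{k-1}(\rho(H)+\epsilon)^{k-1-t}\gamma^t$ collapses to $(\gamma^k-(\rho(H)+\epsilon)^k)/(\gamma-\rho(H)-\epsilon)\le \gamma^k/(\gamma-\rho(H)-\epsilon)$. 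Bounding $\|\Theta(0)\|\le\sqrt{Nnc_0^2+4Nr(c_1^2+c_2^2)}$ from the initial-condition specifications (the $x$-block from $\|x_i(0)-x^*\|_\infty\le c_0$, and the $\chi$- and $y$-blocks from $\|\chi_i(0)\|_\infty\le c_1$, $\|y_i(0)\|_\infty\le c_2$ after the centering projection), and collecting the two pieces produces exactly $\|\Theta(k)\|\le C_0\gamma^k$ with the stated $C_0$. This is the claimed linear rate, conditional on the induction hypothesis.

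The crux is to close the induction by verifying that the quantizers do not saturate at step $k+1$, which is precisely what condition~\eqref{L} on $2L+1$ guarantees. Here I would bound the scaled inputs $\|\chi_j(k+1)-\hat\chi_j(k)\|_\infty/l(k+1)$ and $\|y_j(k+1)-\hat y_j(k)\|_\infty/l(k+1)$ by $(2L+1)/2$. Writing $\chi_j(k+1)-\hat\chi_j(k)=(\chi_j(k+1)-\chi_j(k))+(\chi_j(k)-\hat\chi_j(k))$ and expanding the one-step increments through the dynamics~\eqref{chi}--\eqref{y}, the increment is controlled by the smoothness and boundedness constants of Assumption~\ref{assumption2} (the coefficients $l_1,l_2,l_3$) acting on $\Theta(k)$, which is what assembles into the aggregate factor $\zeta$, plus quantization-error contributions that account for the $3C_1$ term. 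Inserting the induction bound $\|\Theta(k)\|\le C_0\gamma^k$ and dividing by $l(k+1)=l_0\gamma^{k+1}$ cancels the $\gamma^k$ and leaves the step-independent ceiling $(\zeta C_0+3C_1)/(l_0\gamma)$; the base case $k=0$ (where $s_{\chi j}(0)=Q(\chi_j(0)/l(0))$, etc.) contributes the separate ceiling $\sqrt{4c_1^2+4c_2^2}/l_0$. Choosing $2L+1$ above twice the maximum of these two quantities, as in~\eqref{L}, keeps every input inside the unsaturated region and hence validates the error bound at step $k+1$, completing the induction.

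I expect the main obstacle to be exactly this coupling: the error estimate and the decay of $\Theta(k)$ are mutually dependent — no-saturation yields $\|E(k)\|\le C_1\gamma^k$, which yields $\|\Theta(k)\|\le C_0\gamma^k$, which is in turn what is needed to guarantee no-saturation at the next step — so the two estimates cannot be proved in isolation and must be carried together through a single induction. The delicate bookkeeping is to expand the increments $\chi_j(k+1)-\chi_j(k)$ and $y_j(k+1)-y_j(k)$ so that the emerging coefficients pack precisely into $\zeta$, and to ensure the factor $\gamma^k$ cancels cleanly so that the saturation ceiling is genuinely independent of $k$. Once the induction closes, $\|\Theta(k)\|\le C_0\gamma^k\to 0$ forces $\|\boldsymbol{x}(k)-\boldsymbol{x}^*\|\to 0$ at the linear rate $\gamma$, and Lemma~\ref{Lemma2} identifies $\boldsymbol{x}^*$ as the optimizer of~\eqref{primalproblem}.
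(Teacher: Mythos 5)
Your proposal is correct and follows essentially the same route as the paper's proof: a coupled induction simultaneously establishing $\|\Theta(k)\|\leq C_{0}\gamma^{k}$ and the no-saturation/quantization-error bound $\|E(k)\|\leq C_{1}\gamma^{k}$, closed by unrolling the recursion of Lemma~\ref{Lemma3} with the spectral bound of Lemma~\ref{Lemma4} and by expanding the one-step increments of $\boldsymbol{\chi}$ and $\boldsymbol{y}$ into the factor $\zeta$ so that condition~\eqref{L} keeps the quantizers unsaturated. Your per-coordinate rounding bound of $l(k+1)/2$ even makes the constant $C_{1}=2(l_{2}+1)\sqrt{Nr}\,l_{0}$ come out consistently, whereas the paper's displayed chain uses the looser bound $\|\boldsymbol{e}\|_{\infty}\leq l(k_{1}+1)$ and carries a harmless factor-of-two slip in its final line.
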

\begin{proof}
We prove Theorem~\ref{theorem1} by proving the following inequalities hold simultaneously.
\begin{flalign}
		&\|\Theta(k)\|\leq C_{0}\gamma^{k},\label{t1}\\	
		&\|E(k)\|\leq C_{1}\gamma^{k},\label{t2}
\end{flalign} 
where $\Theta(k)$ and $E(k)$ are defined the same as Lemma~\ref{Lemma3}.

When $k=0$, based on the definition of $\Theta(k)$ and the initial values of $x_{i}(0)$, $\chi_{i}(0)$ and $y_{i}(0)$, we have that 
\begin{eqnarray}
	\|\Theta(0)\|\leq \sqrt{Nnc_{0}^2+4Nr(c_{1}^2+c_{2}^2)}.
\end{eqnarray}
It follows from $c_{3}>1$ and $\frac{c_{3}C_{1}}{\gamma-\rho(H)-\epsilon}>0$ that $$\|\Theta(0)\|\leq c_{3}\sqrt{Nnc_{0}^2+4Nr(c_{1}^2+c_{2}^2)}+\frac{c_{3}C_{1}}{\gamma-\rho(H)-\epsilon}=C_{0},$$ which ensures that \eqref{t1} holds at $k=0$.

When $k\in{\mathbb{N}^{+}}$, we assume that \eqref{t1} and \eqref{t2} hold for any $k\leq k_{1},~k_{1}\in{\mathbb{N}^{+}}$. We first prove that \eqref{t1} holds for $k=k_{1}+1$. Iterating \eqref{lemma33} yields
\begin{eqnarray}
	\Theta(k_{1}+1)\leq H^{k_{1}+1}(\alpha)\Theta(0)+\sum_{i=0}^{k_{1}}H^{i}(\alpha)E(k_{1}-i).\label{t11}
\end{eqnarray}
Substituting \eqref{lemma34} into \eqref{t11} and using \eqref{t2} holds for any $k\leq k_{1},~k_{1}\in{\mathbb{N}^{+}}$, we have that
\begin{flalign}
	\|\Theta(k_{1}+1)\|&\leq c_{3}\bar{\gamma}^{k_{1}+1}\|\Theta(0)\|+\sum_{i=0}^{k_{1}}c_{3}\bar{\gamma}^{i}\|E(k_{1}-i)\|,\nonumber\\
	&\leq c_{3}\bar{\gamma}^{k_{1}+1}\|\Theta(0)\|+\sum_{i=0}^{k_{1}}c_{3}\bar{\gamma}^{i}C_{1}\gamma^{k_{1}-i},\nonumber\\
	&\leq(c_{3}\|\Theta(0)\|+\frac{aC_{1}}{\gamma-\bar{\gamma}})\gamma^{k_{1}+1},\label{t12}
\end{flalign}
where $\bar{\gamma}=\rho(H(\alpha))+\epsilon$. Based on the definition of $C_{0}$, it follows that
\begin{flalign}
\|\Theta(k_{1}+1)\|\leq C_{0}\gamma^{k_{1}+1}. \label{k1}
\end{flalign}
Then, we prove \eqref{t2} for $k=k_{1}+1$. Define $$W(k+1)=[\boldsymbol{\chi}(k+1)-\hat{\boldsymbol{\chi}}(k); \boldsymbol{y}(k+1)-\hat{\boldsymbol{y}}(k)].$$
For $k=k_{1}+1$, taking norm on $W(k_{1}+1)$ yields that
\begin{flalign}
\|W(k_{1}+1)\|&=\|\boldsymbol{\chi}(k_{1}+1)-\hat{\boldsymbol{\chi}}(k_{1})\|+\|\boldsymbol{y}(k_{1}+1)-\hat{\boldsymbol{y}}(k_{1})\|,\nonumber\\
&\leq \|\boldsymbol{\chi}(k_1+1)-\boldsymbol{\chi}(k_1)\|+\|\boldsymbol{y}(k_1+1)-\boldsymbol{y}(k_1)\|+\|\boldsymbol{e}_{\boldsymbol{\chi}}(k)\|+\|\boldsymbol{e}_{\boldsymbol{y}}(k)\|.\label{t13}
\end{flalign}
Considering the first term of \eqref{t13}, in light of $\|A-I_{N}\|=\|-L\|\leq 2$, it follows from \eqref{chie} that
\begin{flalign}
	&\|{\boldsymbol{\chi}}(k_1+1)-{\boldsymbol{\chi}}(k_1)\|\nonumber\\
	&\leq\|(A-I_{N})({\boldsymbol{\chi}}(k_1)-1_N\otimes\bar{\chi}(k_{1}))+(I_{N}-A)\boldsymbol{e}_{\boldsymbol{\chi}}(k_1)\|+l_3\|\boldsymbol{x}(k_1+1)-\boldsymbol{x}(k_1)\|,\nonumber\\
	&\leq2\|{\boldsymbol{\chi}}(k_1)-1_N\otimes \bar{\chi}(k_{1})\|+2\|\boldsymbol{e}_{\boldsymbol{\chi}}(k_1)\|+l_3\|\boldsymbol{x}(k_1+1)-\boldsymbol{x}(k_1)\|.\label{t34}
\end{flalign}
Substituting \eqref{xkx1} into \eqref{t34} yields
\begin{flalign}
&\|{\boldsymbol{\chi}}(k_1+1)-{\boldsymbol{\chi}}(k_1)\|\nonumber\\	
	&\leq \alpha l_{1}l_3(1+l_{3})\|\boldsymbol{x}(k_{1})-\boldsymbol{x}^*\|+(\alpha l_{1}l_3+2)\|\boldsymbol{\chi}(k_{1})-\mathbf{1}_{N}\otimes\bar{\chi}(k)\|\nonumber\\
	&\quad+\alpha l_{3}^2\|\boldsymbol{y}(k_{1})-\mathbf{1}_{N}\otimes\bar{y}(k_{1})\|+2\|\boldsymbol{e}_{\boldsymbol{\chi}}(k_1)\|.\label{t15}
\end{flalign}
Similarly, considering the second term of \eqref{t13} and using \eqref{xkx1} again, it follows from \eqref{ye} that
\begin{flalign}
	&\|\boldsymbol{y}(k_1+1)-{\boldsymbol{y}}(k_1)\|\nonumber\\
&\leq\|(I_{N}-A)({\boldsymbol{y}}(k_1)-1_N\otimes \bar{y}(k_1))\|+\|(A-I_{N})\boldsymbol{e}_{\boldsymbol{y}}(k_1)+\nabla_{\boldsymbol{\chi}} f(\boldsymbol{x}(k_1+1),{\boldsymbol{\chi}}(k_1+1))\nonumber\\
&\quad-\nabla_{\boldsymbol{\chi}} f(\boldsymbol{x}(k_1),{\boldsymbol{\chi}}(k_1))\|,\nonumber\\
&\leq2\|{\boldsymbol{y}}(k_1)-1_N\otimes \bar{y}(k_1)\|+2\|\boldsymbol{e}_{\boldsymbol{y}}(k_1)\|+l_2\|\boldsymbol{x}(k_1+1)-\boldsymbol{x}(k_1)\|+l_2\|{\boldsymbol{\chi}}(k_1+1)-{\boldsymbol{\chi}}(k_1)\|,\nonumber\\
&\leq\alpha l_{1}l_2(1+l_{3})\left\|\boldsymbol{x}(k_1)-\boldsymbol{x}^{*}\right\|+(l_2+\alpha l_{1}l_2)\|{\boldsymbol{\chi}}(k_1)-1_N\otimes \bar{\chi}(k_{1})\|\nonumber\\
&\quad+(2+\alpha l_{3}l_2)\big\|\boldsymbol{y}(k_1)-1_N\otimes \bar{y}(k_1)\big\|+2\|\boldsymbol{e}_{\boldsymbol{y}}(k_1)\|.\label{t16}
\end{flalign}
Combing with \eqref{t15} and \eqref{t16}, we bound $\|W(k_{1}+1)\|$ as 
\begin{flalign}
&\|W(k_{1}+1)\|\nonumber\\
&\leq (\alpha l_{1}l_2(1+l_{3})+\alpha l_{1}l_3(1+l_{3}))\left\|\boldsymbol{x}(k_1)-\boldsymbol{x}^{*}\right\|+(l_2+\alpha l_{1}l_2+\alpha l_{1}l_3+2)\nonumber\\
&\quad\|{\boldsymbol{\chi}}(k_1)-1_N\otimes \bar{\chi}(k_{1})\|+(\alpha l_{3}l_2+\alpha l_{3}^2+2)\left\|\boldsymbol{y}(k_1)-1_N\otimes \bar{y}(k_1)\right\|\nonumber\\
&\quad+3\|\boldsymbol{e}_{\boldsymbol{y}}(k_1)\|+3\|\boldsymbol{e}_{\boldsymbol{\chi}}(k_1)\|.\label{t17}
\end{flalign}
Since \eqref{t1} and \eqref{t2} hold for any $k=k_{1}$, we conclude that
\begin{flalign}
	\|W(k+1)\|&\leq \zeta\Theta(k_1)+3(\|\boldsymbol{e}_{\boldsymbol{\chi}}(k_1)\|+\|\boldsymbol{e}_{\boldsymbol{y}}(k_1)\|),\nonumber\\
	&\leq(\zeta C_0+3C_1)\gamma^{k_1}.
\end{flalign}
It further yields
\begin{equation}\label{addd}
	\frac{\|W(k_{1}+1)\|_{\infty}}{l(k_1+1)}\leq \frac{\|W(k_{1}+1)\|}{l(k_1+1)}\leq\frac{\zeta C_0+3C_1}{l_0\gamma},
\end{equation}
which means that $\frac{\|W(k_{1}+1)\|_{\infty}}{l(k_1+1)}\leq L$ and the quantizer is unsaturated at $k=k_{1}+1$ by recalling from the definition of $L$ in \eqref{L}. Therefore, $\|\boldsymbol{e}_{\boldsymbol{\chi}}(k_1+1)\|_{\infty}\leq l(k_{1}+1)$ and $\|\boldsymbol{e}_{\boldsymbol{y}}(k_1+1)\|_{\infty}\leq l(k_{1}+1)$. Together with the definition of $E(k)$, one has that
\begin{flalign}
\|E(k_{1}+1)\|&\leq (2+2l_{2})\|\boldsymbol{e}_{\boldsymbol{\chi}}(k_1+1)\|+2\|\boldsymbol{e}_{\boldsymbol{y}}(k_1+1)\|,\nonumber\\
&\leq(2+2l_{2})(\|\boldsymbol{e}_{\boldsymbol{\chi}}(k_1+1)\|+\|\boldsymbol{e}_{\boldsymbol{y}}(k_1+1)\|),\nonumber\\
&\leq(1+l_{2})\sqrt{Nr}l(k_{1}+1)=C_{1}\gamma^{k_{1}+1}.\label{k2}
\end{flalign}
To sum up, based on \eqref{k1} and \eqref{k2}, we conclude that \eqref{t1} and \eqref{t2} hold for $k=k_{1}+1$. Following by the principle of induction, we can conclude that \eqref{t1} and \eqref{t2} hold for any $k\in{\mathbb{N}}$. Based on the definition of $\Theta(k)$, it can guarantee the linear convergence rate of $\boldsymbol{x}(k)$, which ends the proof. 
\end{proof}
\begin{remark}
It should be noted that $\|\boldsymbol{x}(k)-\boldsymbol{x}^*\|\leq\|\Theta\|$. Thus, $\|\Theta(k)\|\leq C_{0}\gamma^{k}$ in~\eqref{t1} implies that $\lim_{k\rightarrow \infty} \frac{\|\boldsymbol{x}(k+1)-\boldsymbol{x}^*\|}{\|\boldsymbol{x}(k)-\boldsymbol{x}^*\|}\leq \gamma<1$, which ensures the linear convergence of $\boldsymbol{x}(k)$ at rate $\gamma$. Based on the definition of $\gamma\in\left(\rho(H),1\right)$, the linear convergence rate of the D-QAGT algorithm is no less than $\rho(H)$. In addition, according to the selection of the quantization levels $2L+1$ in~\eqref{L}, the required maximum bandwidth for ensuring the linear convergence of the D-QAGT is $\mathcal{B}=\big\lceil\log_{2}(2\max\{\frac{\zeta C_0+3C_1}{l_0\rho(H)},\frac{\sqrt{4c_1^2+4c_2^2}}{l_0}\}+1)\big\rceil$.
\end{remark}

\section{Simulation}

To evaluate the performance of the proposed algorithm, we verify its communication saving in the optimal placement problem. In an optimal placement problem, assume that there are $M=5$ entities, which are located at $r_{1}=(3;5),~r_{2}=(6;9),~r_{3}=(9;8),~r_{4}=(6;2)$ and $r_{5}=(9;2)$. There are distributed across $N=5$ free entities, each of that privately knows some of the fixed $M$ entities. The aim is to determine the optimal position $\boldsymbol{x}=[x_{i}]_{i\in\{1,\cdots,5\}}$ of the free entities for minimizing the sum of all distance from the current position of each free entity
to a corresponding fixed entity’s position and the distances
from each agent to the weighted center of all
free entities.  In this case, the cost function of each free entity is modeled as follows
\begin{equation}
	f_{i}(x_{i},\chi(\boldsymbol{x}))=\gamma_{i}\|x_{i}-r_{i}\|^2+\|x_{i}-\chi(\boldsymbol{x})\|^2,~i\in\{1,\cdots,5\},
\end{equation}
where $\gamma_{i}=100$ represents the weight constant and $\chi(\boldsymbol{x})=\sqrt{\sum_{i=1}^5 x_{i}^2/N}$. The communication graph among the free entities is randomly chosen to be strongly connected and the communication channel between free entities exists finite bandwidth constraints. The stepsize is chosen as $\alpha=0.01$ in the D-QAGT algorithm. 

The scaling functions is chosen as $s_{\boldsymbol{\chi}}(k)=s_{\boldsymbol{y}}(k)=10e^{-0.1k}$ and the quantization levels is chosen as $L=10$. Fig. 1 shows that the evolution of $x_{i}(k)$ associated with iterations $k=100$. The results shows that all free entities in the plane converge to their optimal positions $\boldsymbol{x}^*=[x_{i}^*]_{i\in\{1,\cdots,5\}}$. 
\begin{figure}
	\label{FF1}
	\centering
	\subfigure[The first coordinate of $x_{i}(k)$.]{\includegraphics[height=3cm,width=6cm]{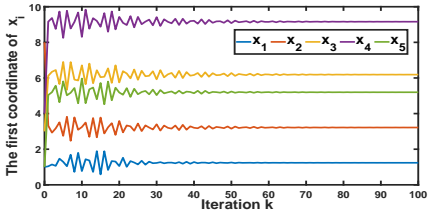}}\label{fig4.1}
	\quad
	\subfigure[The second  coordinate of $x_{i}(k)$.]{\includegraphics[height=3cm,width=6cm]{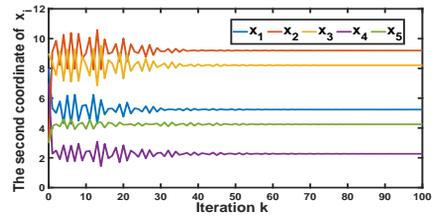}}\label{fig4.2}
	\caption{The evolution of $x_{i}(k)\in{\mathbb{R}^{2}},~i\in\{1,\cdots,5\}$. The optimal positions of each agent are $x_{1}^*=(1.2376;5.2393)$, $x_{2}^*=(3.2178,9.1997)$, $x_{3}^*=(6.1881;8.2096)$, $x_{4}^*=(9.1584,2.2690)$ and $x_{5}^*=(5.1980,4.2492)$.}
\end{figure}
Fig. 2 shows that the evolution of $\chi_{i}(k)$. The results shows that the estimate $\chi_{i}(k)$ of each free entity converges to the optimal $\chi(\boldsymbol{x}^*)=\sqrt{\sum_{i=1}^5 (x_{i}^*)^2/N}$.
\begin{figure}
	\centering
	\subfigure[The first coordinate of $\chi_{i}(k)$ v.s. iteration.]{\includegraphics[height=3cm,width=6cm]{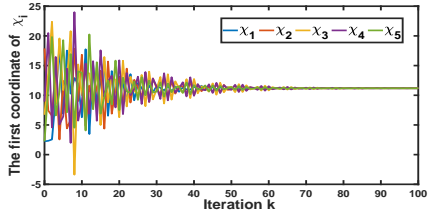}}\label{fig4.3}
	\quad
	\subfigure[The second coordinate of $\chi_{i}(k)$ v.s. iteration.]{\includegraphics[height=3cm,width=6cm]{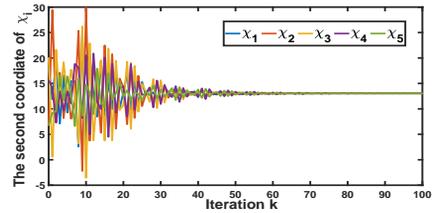}}\label{fig4.4}
	\caption{The evolution of the estimate $\chi_{i}(k)\in{\mathbb{R}^{2}},~i\in\{1,\cdots,5\}$. The optimal aggregative information is $\chi(\boldsymbol{x}^*)=(11.1803,13.0437)$.}
\end{figure}

In order to show the linear convergence, we take a performance index $J$ as $J(t)=e^{\gamma k}\|f(\boldsymbol{x})-f(\boldsymbol{x}^*)\|$ with $\gamma=0.1$. The trajectory of $J(t)$ for the D-QAGT is shown in Fig. 3(a). The results illustrates linear convergence of D-QAGT with a rate no less than the constant $0.1$. To compare the convergence with the related D-AGT algorithm in~\cite{lixiuxian}, in which the communication channel is ideal, the evolutions of $f(\boldsymbol{x})=\sum_{i=1}^{5}f_{i}(\boldsymbol{x})$ for D-AGT and D-QAGT are shown in Fig. 3(b). The results shows that the influence of quantization on the convergence speed is slight under the quantization levels $L=10$.
\begin{figure}
	\centering
	\subfigure[The perfmance function $J(t)$ v.s. interation]{\includegraphics[height=3cm,width=6cm]{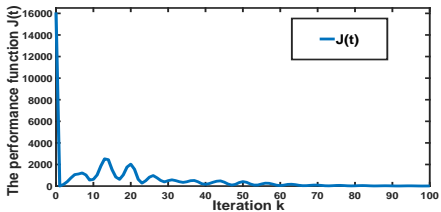}}\label{fig4.5}
	\quad
	\subfigure[The global functions $J(t)$ v.s. iteration.]{\includegraphics[height=3cm,width=6cm]{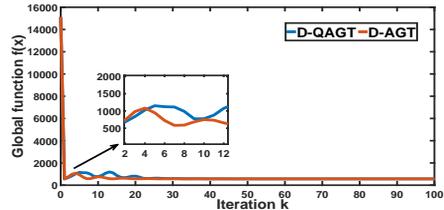}}\label{fig4.6}
	\caption{(a) represents the trajectory of the performance function $J(t)$. (b) represents the convergence trajectories of $f(\boldsymbol{x})$ for Q-DAGT (blue line) and D-AGT (red line), respectively.}
\end{figure}

\section{Conclusion}
This paper proposed a novel distributed quantized algorithm to solve the aggregative optimization problem, in which agents collectively minimize the sum of the local cost function that depends on the global aggregative variable. Particularly, we combined gradient descent and variables tracking methods to estimate the global aggregative variable, and also introduced the quantization technology to overcome communication bottleneck. By using dynamic encoder-decoder schemes, the influence of quantization errors is eliminated so that the linear convergence of the proposed Q-DGAT algorithms is achieved.  Future works would include exploiting multiple information compression technologies for distributed aggregative optimization problems, such as sparsification and even-trigger technologies.
\section{Appendix}
\subsection{Proof of Lemma~\ref{Lemma3}}\label{a51}
For explicit illustration, we prove the Lemma~\ref{Lemma3} by the following two steps. 

\textbf{Step 1.} We first bound $\|\boldsymbol{x}(k+1)-\boldsymbol{x}^*\|$, $\|\boldsymbol{\chi}(k+1)-\mathbf{1}_{N}\otimes\bar{\chi}(k+1)\|$ and $\|\boldsymbol{y}(k+1)-\mathbf{1}_{N}\otimes\bar{y}(k+1)\|$, respectively. 
\begin{itemize}
\item Invoking \eqref{xx} into $\|\boldsymbol{x}(k+1)-\boldsymbol{x}^*\|$ yields that
\begin{flalign}
	&\left\|\boldsymbol{x}(k+1)-\boldsymbol{x}^{*}\right\|\nonumber \\
	&=\big\|\boldsymbol{x}(k)-\boldsymbol{x}^{*}-\alpha\left[\nabla_{\boldsymbol{x}} f\left(\boldsymbol{x}(k), \boldsymbol{\chi}(k)\right)+\nabla_{\boldsymbol{x}}g\left(\boldsymbol{x}(k)\right)\boldsymbol{y}(k)\right]\big\|, \nonumber\\
	&\leq \big\|\boldsymbol{x}(k)\!\!-\!\boldsymbol{x}^{*}\!\!-\!\alpha \big[\nabla_{\boldsymbol{x}} f\left(\boldsymbol{x}(k), \mathbf{1}_{N} \!\otimes\! \bar{\chi}(k)\right)\!\!+\!\nabla_{\boldsymbol{x}}g\left(\boldsymbol{x}(k)\right)\![\mathbf{1}_{N} \!\otimes\! \frac{1}{N} \sum_{i=1}^{N} \nabla_{\bar{\chi}} f_{i}\left(x_{i}(k), \bar{\chi}(k)\right)]\big]\nonumber \\
	&\quad+\alpha \nabla_{\boldsymbol{x}} f\left(\boldsymbol{x}^{*}\right) \big\|+\alpha \big\| \nabla_{\boldsymbol{x}} f\left(\boldsymbol{x}(k), \boldsymbol{\chi}(k)\right)+\nabla_{\boldsymbol{x}}g\left(\boldsymbol{x}(k)\right)( \mathbf{1}_{N} \otimes \bar{y}(k))\nonumber\\
	&\quad-\nabla_{\boldsymbol{x}} f\left(\boldsymbol{x}(k), \mathbf{1}_{N} \otimes \bar{\chi}(k)\right) -\nabla_{\boldsymbol{x}}g\left(\boldsymbol{x}(k)\right)[ \mathbf{1}_{N} \otimes \frac{1}{N} \sum_{i=1}^{N} \nabla_{\bar{\chi}} f_{i}\left(x_{i}(k), \bar{\chi}(k)\right)]\big\|\nonumber\\
	&\quad+\alpha\big\|\nabla_{\boldsymbol{x}}g\left(\boldsymbol{x}(k)\right) \boldsymbol{y}(k)-\nabla_{\boldsymbol{x}}g\left(\boldsymbol{x}(k)\right)(\mathbf{1}_{N} \otimes \bar{y}(k))\big\|. \label{x_x*}
\end{flalign}
Following from Lemma 3 in \cite{lixiuxian}, for a $\mu$-strongly convex and $l$-smooth function $f: \mathbb{R}^{n}\rightarrow \mathbb{R}$, there is $$\|x-\alpha \nabla_{x}f(x)-(y-\alpha \nabla_{y{\tiny }} f(y))\| \leq(1-\mu \alpha)\|x-y\|,~\forall x,y\in{\mathbb{R}^{n}},$$ with $\alpha\in(0,1/l]$. Recalling \eqref{barchi3} that $\bar{\chi}(k)=\chi(\boldsymbol{x}(k))$ and using 
$\nabla_{\boldsymbol{x}}f(\boldsymbol{x}(k))=\nabla_{\boldsymbol{x}}f(\boldsymbol{x}(k),\mathbf{1}_{N}\otimes\bar{\chi}(k))+\nabla_{\boldsymbol{x}}g(\boldsymbol{x}(k))\big[\mathbf{1}_{N}\otimes\frac{1}{N}\sum_{i=1}^{N}\nabla_{\bar{\chi}}f_{i}(x_{i}(k),\bar{\chi}(k))\big]$, we bound the first term of the right of \eqref{x_x*} as follows,
\begin{flalign}
&\big\|\boldsymbol{x}(k)\!\!-\!\alpha \big[\nabla_{\boldsymbol{x}} f\left(\boldsymbol{x}(k), \mathbf{1}_{N} \otimes \bar{\chi}(k)\right)\!+\!\nabla_{\boldsymbol{x}}g\left(\boldsymbol{x}(k)\right)[\mathbf{1}_{N} \otimes \frac{1}{N} \sum_{i=1}^{N} \nabla_{\bar{\chi}} f_{i}\left(x_{i}(k), \bar{\chi}(k)\right)]\big]\nonumber \\
&\quad-(\boldsymbol{x}^{*}-\alpha \nabla_{\boldsymbol{x}} f\left(\boldsymbol{x}^{*}\right)) \big\|\leq(1-\mu\alpha)\|\boldsymbol{x}(k)-\boldsymbol{x}^{*}\|.\label{first}
\end{flalign}
Similarly, using \eqref{bary3} and Assumption~\ref{assumption2}, the second term of the right of \eqref{x_x*} is bounded as
\begin{flalign}
		&\alpha \big\| (\nabla_{\boldsymbol{x}} f\left(\boldsymbol{x}(k), \boldsymbol{\chi}(k)\right)+\nabla_{\boldsymbol{x}}g\left(\boldsymbol{x}(k)\right)( \mathbf{1}_{N} \otimes \bar{y}(k)))-(\nabla_{\boldsymbol{x}} f\left(\boldsymbol{x}(k), \mathbf{1}_{N} \otimes \bar{\chi}(k)\right) \nonumber\\
		&\quad+\nabla_{\boldsymbol{x}}g\left(\boldsymbol{x}(k)\right)[ \mathbf{1}_{N} \otimes \frac{1}{N} \sum_{i=1}^{N} \nabla_{\bar{\chi}} f_{i}\left(x_{i}(k), \bar{\chi}(k)\right)])\big\|\nonumber\\
		&\leq \alpha l_{1}\|\boldsymbol{\chi}(k)-\mathbf{1}_{N}\otimes \bar{\chi}(k)\|.\label{second}
\end{flalign}
Combining with \eqref{first} and \eqref{second}, in light of $\|\nabla_{\boldsymbol{x}}g\left(\boldsymbol{x}(k)\right)\|\leq l_{3}$, we conclude that
\begin{flalign}
	&\left\|\boldsymbol{x}(k+1)-\boldsymbol{x}^{*}\right\|\leq(1-\mu \alpha)\left\|\boldsymbol{x}(k)-\boldsymbol{x}^{*}\right\|+\alpha l_{1}\|\boldsymbol{\chi}(k)-\mathbf{1}_{N}\otimes\bar{\chi}(k)\|\nonumber\\
	 &\quad+\alpha l_{3}\|\boldsymbol{y}(k)-\mathbf{1}_{N}\otimes\bar{y}(k)\|.
\end{flalign}

\item Invoking \eqref{chie} into $\boldsymbol{\chi}(k+1)-\mathbf{1}_{N}\otimes\bar{\chi}(k+1)$ yields that
\begin{flalign}
&\big\|\boldsymbol{\chi}(k+1)-(\frac{1}{N}\mathbf{1}_{N}\mathbf{1}_{N}^{T}\otimes I_{r})\boldsymbol{\chi}(k+1)\big\|\nonumber\\
&=\big\|\boldsymbol{L}\boldsymbol{e}_{\boldsymbol{\chi}}(k)+g(\boldsymbol{x}(k+1))-g(\boldsymbol{x}(k))+(A\otimes I_{r})\boldsymbol{\chi}(k)-(\frac{1}{N}\mathbf{1}_{N}\mathbf{1}_{N}^{T}\otimes I_{r})\nonumber\\
&\quad\big[\boldsymbol{L}\boldsymbol{e}_{\boldsymbol{\chi}}(k)+g(\boldsymbol{x}(k+1))-g(\boldsymbol{x}(k))+(A\otimes I_{r})\boldsymbol{\chi}(k)\big]\big\|,\nonumber\\
&\leq \big\|(A\otimes I_{r})\boldsymbol{\chi}(k)-(\frac{1}{N}\mathbf{1}_{N}\mathbf{1}_{N}^{T}\otimes I_{r})(A\otimes I_{r})\boldsymbol{\chi}(k)\big\|+\big\|I_{Nr}-(\frac{1}{N}\mathbf{1}_{N}\mathbf{1}_{N}^{T}\otimes I_{r})\big\|\nonumber\\
&\quad\big\|\boldsymbol{L}\boldsymbol{e}_{\boldsymbol{\chi}}(k)\big\|+\big\|I_{Nr}-(\frac{1}{N}\mathbf{1}_{N}\mathbf{1}_{N}^{T}\otimes I_{r}))\big\|\big\|g(\boldsymbol{x}(k+1))-g(\boldsymbol{x}(k))\big\|,	
\end{flalign}
where the fact that $I_{Nr}-\boldsymbol{L}=A\otimes I_{r}$ and $\mathbf{1}_{N}\otimes \bar{\chi}(k)=(\frac{1}{N}\mathbf{1}_{N}\mathbf{1}_{N}^{T}\otimes I_{r})\boldsymbol{\chi}(k)$ are used. Based on Lemma~\ref{Lemma1} and $\|I_{Nr}-\frac{1}{N}\mathbf{1}_{N}\mathbf{1}_{N}^{T}\otimes I_{r}\|=1$, it follows that
\begin{flalign}
		&\big\|\boldsymbol{\chi}(k+1)-(\frac{1}{N}\mathbf{1}_{N}\mathbf{1}_{N}^{T}\otimes I_{r})\boldsymbol{\chi}(k+1)\big\|\nonumber\\
		&\leq \kappa\big\|\boldsymbol{\chi}(k)-(\frac{1}{N}\mathbf{1}_{N}\mathbf{1}_{N}^{T}\otimes I_{r})\boldsymbol{\chi}(k)\big\|+\big\|I_{N}-A\big\|\big\|\boldsymbol{e}_{\boldsymbol{\chi}}(k)\big\|+\big\|g(\boldsymbol{x}(k+1))-g(\boldsymbol{x}(k))\big\|,\nonumber\\
		&\leq \kappa\big\|\boldsymbol{\chi}(k)-(\frac{1}{N}\mathbf{1}_{N}\mathbf{1}_{N}^{T}\otimes I_{r})\boldsymbol{\chi}(k)\big\|+2\|\boldsymbol{e}_{\boldsymbol{\chi}}(k)\|+l_{3}\|\boldsymbol{x}(k+1)-\boldsymbol{x}(k)\|,\label{chikchi}
\end{flalign}
where $\kappa=\|A-1/N\mathbf{1}_{N}\mathbf{1}_{N}^{T}\|<1$. We now compute the upper bound of $\|\boldsymbol{x}(k+1)-\boldsymbol{x}(k)\|$. Invoking \eqref{xx} into $\|\boldsymbol{x}(k+1)-\boldsymbol{x}(k)\|$ yields that
\begin{flalign}
	&\left\|\boldsymbol{x}(k+1)-\boldsymbol{x}(k)\right\|=\alpha\left\|\nabla_{\boldsymbol{x}} f\left(\boldsymbol{x}(k), \boldsymbol{\chi}(k)\right)+\nabla_{\boldsymbol{x}} g\left(\boldsymbol{x}(k)\right)\boldsymbol{y}(k)\right\| \nonumber\\
	&\leq \alpha \big\| \nabla_{\boldsymbol{x}} f\left(\boldsymbol{x}(k), \boldsymbol{\chi}(k)\right)\!\!+\!\nabla_{\boldsymbol{x}} g\left(\boldsymbol{x}(k)\right)(\frac{1}{N}\mathbf{1}_{N}\mathbf{1}_{N}^{T}\otimes I_{r})\boldsymbol{y}(k)\!\!-\!\nabla_{\boldsymbol{x}} f\left(\boldsymbol{x}^{*}, \mathbf{1}_{N} \otimes \chi^{*}\right)\nonumber\\
	&\quad \!\!-\!\nabla_{\boldsymbol{x}}g\left(\boldsymbol{x}^*\right)\big[\mathbf{1}_{N} \!\!\otimes\! \frac{1}{N} \sum_{i=1}^{N} \nabla_{\chi_{i}} f_{i}\left(x_{i}^{*}, \chi_{i}^{*}\right)\big] \big\|\!\!+\!\alpha\big\|\nabla_{\boldsymbol{x}} g\left(\boldsymbol{x}(k)\right)\big(\boldsymbol{y}(k)\!\!-\!(\frac{1}{N}\mathbf{1}_{N}\mathbf{1}_{N}^{T}\!\!\otimes\! I_{r})\boldsymbol{y}(k)\big)\big\|,\nonumber\\
	&=\alpha \big\| \nabla_{\boldsymbol{x}} f\left(\boldsymbol{x}(k), \boldsymbol{\chi}(k)\right)+\nabla_{\boldsymbol{x}} g\left(\boldsymbol{x}(k)\right)(\mathbf{1}_{N}\otimes \bar{y}(k))-\nabla_{\boldsymbol{x}} f\left(\boldsymbol{x}^{*}, \mathbf{1}_{N} \otimes \chi^{*}\right) -\nabla_{\boldsymbol{x}}g\left(\boldsymbol{x}^*\right)\nonumber\\
	&\quad\big[\mathbf{1}_{N} \otimes \frac{1}{N} \sum_{i=1}^{N} \nabla_{\chi_{i}} f_{i}\left(x_{i}^{*}, \chi_{i}^{*}\right)\big] \big\|+\alpha\big\|\nabla_{\boldsymbol{x}} g\left(\boldsymbol{x}(k)\right)(\boldsymbol{y}(k)-\mathbf{1}_{N}\otimes \bar{y}(k))\big\|.\nonumber
\end{flalign}
It follows from Assumption~\ref{assumption2} that
\begin{flalign}
	&\left\|\boldsymbol{x}(k+1)-\boldsymbol{x}(k)\right\|\nonumber\\
	&\leq \alpha l_{1}\left(\left\|\boldsymbol{x}(k)-\boldsymbol{x}^{*}\right\|+\left\|\boldsymbol{\chi}(k)-\mathbf{1}_{N} \otimes \chi^{*}\right\|\right)+\alpha l_{3}\left\|\boldsymbol{y}(k)-\mathbf{1}_{N}\otimes \bar{y}(k)\right\|,\nonumber\\
	&\leq \alpha l_{1}\big(\left\|\boldsymbol{x}(k)\!-\!\boldsymbol{x}^{*}\right\|\!+\!\big\|\boldsymbol{\chi}(k)\!-\!(\frac{1}{N}\mathbf{1}_{N}\mathbf{1}_{N}^{T}\otimes I_{r})\boldsymbol{\chi}(k)\big\|\big)+\alpha l_{3}\left\|\boldsymbol{y}(k)-\mathbf{1}_{N}\otimes \bar{y}(k)\right\|\nonumber\\
	&\quad+\alpha l_{1}\big\|(\frac{1}{N}\mathbf{1}_{N}\mathbf{1}_{N}^{T}\otimes I_{r})\boldsymbol{\chi}(k)-\mathbf{1}_{N} \otimes \chi^{*}\big\|.\label{xkx}
\end{flalign}
Using again $(\frac{1}{N}\mathbf{1}_{N}\mathbf{1}_{N}^{T}\otimes I_{r})\boldsymbol{\chi}(k)=\mathbf{1}_{N}\otimes \bar{\chi}(k)$, we compute the upper bound of the last term in \eqref{xkx} as follows,
\begin{flalign}
	&\big\|(\frac{1}{N}\mathbf{1}_{N}\mathbf{1}_{N}^{T}\otimes I_{r})\boldsymbol{\chi}(k)-\mathbf{1}_{N} \otimes \chi^{*}\big\|^2=\big\|\mathbf{1}_{N}\otimes(\bar{\chi}(k)-\chi^*)\big\|^2 \nonumber\\
	&=N\big\|\frac{1}{N} \sum_{i=1}^{N}\left(g_{i}\left(x_{i}(k)\right)-g_{i}\left(x_{i}^*\right)\right)\big\|^{2} \leq \frac{1}{N}\big(\sum_{i=1}^{N}\left\|g_{i}\left(x_{i}(k)\right)-g_{i}\left(x_{i}^*\right)\right\|\big)^{2},\nonumber \\
	&\leq \frac{1}{N}\big(\sum_{i=1}^{N} l_{3}\left\|x_{i}(k)-x_{i}^{*}\right\|\big)^{2} \leq l_{3}^{2} \sum_{i=1}^{N}\left\|x_{i}(k)-x_{i}^{*}\right\|^{2} =l_{3}^{2}\left\|\boldsymbol{x}(k)-\boldsymbol{x}^{*}\right\|^{2}. \label{zhongjian}
\end{flalign}
Substituting \eqref{zhongjian} into \eqref{xkx}, we conclude that
\begin{flalign}
		&\left\|\boldsymbol{x}(k+1)-\boldsymbol{x}(k)\right\|\leq \alpha l_{1}(1+l_{3})\left\|\boldsymbol{x}(k)-\boldsymbol{x}^{*}\right\|+\alpha l_{1}\big\|\boldsymbol{\chi}(k)-(\frac{1}{N}\mathbf{1}_{N}\mathbf{1}_{N}^{T}\otimes I_{r})\boldsymbol{\chi}(k)\big\|\nonumber\\
		&\quad+\alpha l_{3}\big\|\boldsymbol{y}(k)-(\frac{1}{N}\mathbf{1}_{N}\mathbf{1}_{N}^{T}\otimes I_{r})\boldsymbol{y}(k)\big\|.\label{xkx1}
\end{flalign}
Further, substituting \eqref{xkx1} into \eqref{chikchi}, we conclude that
\begin{flalign}
		&\big\|\boldsymbol{\chi}(k+1)-\mathbf{1}_{N}\otimes\bar{\chi}(k+1)\big\|\nonumber\\
		&\leq (\kappa\!+\!\alpha l_{1}l_{3})\big\|\boldsymbol{\chi}(k)\!-\!\mathbf{1}_{N}\otimes\bar{\chi}(k)\big\|\!+\!2\|\boldsymbol{e}_{\boldsymbol{\chi}}(k)\|\!+\!\alpha l_{1}l_{3}(1\!+\!l_{3})\|\boldsymbol{x}(k)\!-\!\boldsymbol{x}^*\|\nonumber\\
		&\quad+\alpha l_{3}^2\|\boldsymbol{y}(k)-\mathbf{1}_{N}\otimes\bar{y}(k)\|.\label{chikchi1}	
\end{flalign}
\item Invoking \eqref{ye} into $\boldsymbol{y}(k+1)-\mathbf{1}_{N}\otimes\bar{y}(k+1)$ yields that
\begin{flalign}
		&\|\boldsymbol{y}(k+1)-(\frac{1}{N}\mathbf{1}_{N}\mathbf{1}_{N}^{T}\otimes I_{r})\boldsymbol{y}(k+1)\|\leq \kappa\|\boldsymbol{y}(k)-(\frac{1}{N}\mathbf{1}_{N}\mathbf{1}_{N}^{T}\otimes I_{r})\boldsymbol{y}(k)\|\nonumber\\
		&\quad+\|\nabla_{\boldsymbol{\chi}}f(\boldsymbol{x}(k+1),\boldsymbol{\chi}(k+1))-\nabla_{\boldsymbol{\chi}}f(\boldsymbol{x}(k),\boldsymbol{\chi}(k))\|+2\|\boldsymbol{e}_{\boldsymbol{y}}(k)\|.\label{yky}	
\end{flalign}
Using \eqref{chi}, via Assumption~\ref{assumption2}, we obtain that
\begin{eqnarray}
	\begin{aligned}
&\|\nabla_{\boldsymbol{\chi}}f(\boldsymbol{x}(k+1),\boldsymbol{\chi}(k+1))-\nabla_{\boldsymbol{\chi}}f(\boldsymbol{x}(k),\boldsymbol{\chi}(k))\|\\
&\leq l_{2}\big(\|\boldsymbol{x}(k+1)-\boldsymbol{x}(k)\|+\|\boldsymbol{\chi}(k+1)-\boldsymbol{\chi}(k)\|\big),\\
&\leq l_{2}\big(\|\boldsymbol{x}(k\!+\!1)\!-\!\boldsymbol{x}(k)\|\!+\!\|\boldsymbol{L}\boldsymbol{e}_{\boldsymbol{\chi}}(k)\!+\!g(\boldsymbol{x}(k\!+\!1))\!-\!g(\boldsymbol{x}(k))\!+\!(A\otimes I_{r})\boldsymbol{\chi}(k)\!-\!\boldsymbol{\chi}(k)\|\big),\\
&\leq l_{2}\big(\|\boldsymbol{x}(k+1)-\boldsymbol{x}(k)\|+\|\boldsymbol{L}\boldsymbol{e}_{\boldsymbol{\chi}}(k)+g(\boldsymbol{x}(k+1))-g(\boldsymbol{x}(k))\\
&\quad+(A\otimes I_{r}-I_{Nr})(\boldsymbol{\chi}(k)-(\frac{1}{N}\mathbf{1}_{N}\mathbf{1}_{N}^{T}\otimes I_{r})\boldsymbol{\chi}(k))\|\big),\\
&\leq
l_{2}(1\!+\!l_{3})\|\boldsymbol{x}(k\!+\!1)\!\!-\!\boldsymbol{x}(k)\|\!\!+\!2l_{2}\|\boldsymbol{\chi}(k)\!-\!(\frac{1}{N}\mathbf{1}_{N}\mathbf{1}_{N}^{T}\!\otimes\! I_{r})\boldsymbol{\chi}(k)\|\!\!+\!2l_{2}\|\boldsymbol{e}_{\boldsymbol{\chi}}(k)\|.\label{zhongjian2}	
\end{aligned}	
\end{eqnarray}
Substituting \eqref{zhongjian2} into \eqref{yky}, one has that
\begin{flalign}
		&\|\boldsymbol{y}(k+1)\!-\!\mathbf{1}_{N}\otimes\bar{y}(k+1)\|\nonumber\\
		&\leq (\kappa\!+\!\alpha l_{2}l_{3}(1\!+\!l_{3}))\|\boldsymbol{y}(k)\!-\!\mathbf{1}_{N}\otimes\bar{y}(k)\|\!+\!\alpha l_{1}l_{2}(1\!+\!l_{3})^2\|\boldsymbol{x}(k)-\boldsymbol{x}^*\|\!+\!(\alpha l_{1}l_{2}\nonumber\\
		&\quad(1\!+\!l_{3})\!+\!2l_{2})\|\boldsymbol{\chi}(k)\!-\!\mathbf{1}_{N}\otimes\bar{\chi}(k)\|\!+\!2l_{2}\|\boldsymbol{e}_{\boldsymbol{\chi}}(k)\|\!+\!2\|\boldsymbol{e}_{\boldsymbol{y}}(k)\|.
\end{flalign}
\end{itemize}
\textbf{Step 2.} To sum up, we have bounded the $\|\boldsymbol{x}(k+1)-\boldsymbol{x}^*\|$, $\|\boldsymbol{\chi}(k+1)-\mathbf{1}_{N}\otimes\bar{\chi}(k+1)\|$ and $\|\boldsymbol{y}(k+1)-\mathbf{1}_{N}\otimes\bar{y}(k+1)\|$. Based on the definition of $\Theta(k)$, by substituting the upper bound of the norm of each coordinate, we conclude that
\begin{eqnarray}
	\Theta(k+1)\leq H(\alpha)\Theta(k)+E(k).
\end{eqnarray}
We now prove $\rho(H)<1$. Let $\lambda(\alpha)$ be the eigenvalue of $H(\alpha)$. It is obvious that $1$ is a simple eigenvalue of $H(0)$ and $\nu=[1;0;0]$ is $1$'s corresponding left and right eigenvectors. According to~\cite{Lemma1} and Lemma 5 in~\cite{lixiuxian}, there is $\frac{d\lambda(\alpha)}{d\alpha}|_{\alpha=0}=-\mu<0$, which means that the spectral radius of $H(\alpha)$ is less than $1$ for sufficiently small positive $\alpha$. Furthermore, the strongly connected graph makes the related $H(\alpha)$ is irreducible~\cite{bukeyue}. Combining with Lemmas 1-2 in~\cite{lixiuxian} can ensure that $1$ will be a simple eigenvalue of $H(\alpha)$ as $\alpha$ increases from zero to some positive value. By computing $\det(I-H(\alpha))=0$, we obtain that $$\alpha=\frac{\mu(1-\kappa)^2}{l_{3}(\mu+l_{1}+l_{2}l_{3})\big((1-\kappa)(l_{1}+l_{2}+l_{2}l_{3})+2l_{2}l_{3}\big)}.$$
Hence, when $\alpha\in\big(0,\frac{\mu(1-\kappa)^2}{l_{3}(\mu+l_{1}+l_{2}l_{3})((1-\kappa)(l_{1}+l_{2}+l_{2}l_{3})+2l_{2}l_{3})}\big)$, all eigenvalues of $H(\alpha)$ have absolute values less than $1$.

\subsection{Proof of Lemma~\ref{Lemma4}}\label{a52}
By Shur theorem, there exists an unitary matrix $U=[u_{1},u_{2},u_{3}]\in{\mathbb{R}^{3\times3}}$ such that $U^{H}H(\alpha)U$ is an upper triangular matrix as the following form
\begin{equation*}
	U^{H}H(\alpha)U=\left[
	\begin{array}{ccc}
		\lambda_{H1} & T_{12} & T_{13}  \\
	0 & \lambda_{H2} & T_{23} \\
		0 & 0 & \lambda_{H3} \\
	\end{array}
	\right],
\end{equation*}
where $T_{12}=u_{1}^{H}H(\alpha)u_{2}$, $T_{13}=u_{1}^{H}H(\alpha)u_{3}$ and $T_{23}=u_{2}^{H}H(\alpha)u_{3}$. Further define $D=\text{diag}(1,\eta,\eta^2)$ with $\eta=\epsilon/2\|H(\alpha)\|$, then it follows that
\begin{equation*}
	D^{-1}U^{H}H(\alpha)UD=\left[
	\begin{array}{ccc}
		\lambda_{H1} & T_{12}\eta & T_{13}\eta^2  \\
		0 & \lambda_{H2} & T_{23}\eta \\
		0 & 0 & \lambda_{H3} \\
	\end{array}
	\right].
\end{equation*}
It should be noted that $\max\{\|T_{12}\|,\|T_{13}\|, \|T_{23}\|\}\leq \|H(\alpha)\|$ such that
\begin{flalign} &\|D^{-1}U^{H}H(\alpha)UD\|_{\infty}\leq \rho(H(\alpha))+\epsilon,\nonumber\\
&\|D^{-1}U^{H}H^{k}(\alpha)UD\|_{\infty}\leq
\|(D^{-1}U^{H}H(\alpha)UD)^{k}\|_{\infty}\leq (\rho(H(\alpha))+\epsilon)^{k}.
\end{flalign} 
We can calculate $\|H^{k}(\alpha)\|_{\infty}$ as follows
\begin{flalign} \|H^{k}(\alpha)\|_{\infty}&=\|UDD^{-1}U^{H}H^{k}(\alpha)UDD^{-1}U^{H}\|_{\infty},\nonumber\\
&\leq \|U\|_{\infty}\|U^{H}\|_{\infty}\|D^{-1}\|_{\infty}\|D\|_{\infty}(\rho(H(\alpha))+\epsilon)^{k},\nonumber\\
&\leq 3\text{max}\{\frac{4\|H(\alpha)\|^2}{\epsilon^2},\frac{\epsilon^2}{4\|H(\alpha)\|^2}\}(\rho(H(\alpha))+\epsilon)^{k},
\end{flalign} 
which further yields that 
\begin{equation}
\|H^{k}(\alpha)\|\leq\sqrt{3}\|H^{k}(\alpha)\|_{\infty}=3\sqrt{3}\text{max}\{\frac{4\|H(\alpha)\|^2}{\epsilon^2},\frac{\epsilon^2}{4\|H(\alpha)\|^2}\}(\rho(H(\alpha))+\epsilon)^{k}.
\end{equation}

\section*{ACKNOWLEDGEMENT}
\small
This work was supported in part by the National Natural Science Foundation of China under grant 61903027, 72171171 and in art by the China Postdoctoral Science Foundation under grant 2021M702481.
\makesubmdate

\makecontacts

\end{document}